\documentclass[reqno,12pt]{amsart}
\usepackage{amsfonts}
\usepackage{times}
\usepackage{amssymb, amsmath}
\allowdisplaybreaks

%-----------------------------------------------
%\date{2003}
%----Theorems----------------------------------
\theoremstyle{plain}
\newtheorem{thm}{Theorem}[section]

\newtheorem{lem}[thm]{Lemma}

\theoremstyle{definition}
\newtheorem{defn}{Definition}[section]
\theoremstyle{remark}
\newtheorem{rem}{Remark}[section]
\numberwithin{equation}{section}

%\newcommand\arcctg{\mathop{\text{arcctg}}}
%\renewcommand{\qedsymbol}{} %Õâ¸öÃüÁî¿ÉÒÔÓÃÀ´È¥µô¶¨ÀíÖ¤Ã÷Ö®ºóµÄÐ¡·½¿ò¡£
%----------------------------------------------------------------------

\newcommand{\mbh}{\mathbf{h}}
\newcommand{\mbx}{\mathbf{x}}
\newcommand{\mby}{\mathbf{y}}
\def\bu{\mathbf{u}}
\def\bv{\mathbf{v}}
\def\bw{\mathbf{w}}

\newcommand{\real}{\mathbb{R}}

\DeclareMathOperator{\dv}{div}
\DeclareMathOperator{\curl}{curl}

%---------------------------------------------------------------------------
\title[Vanishing viscosity  for helical
Navier-Stokes equations] {The limit of vanishing viscosity for the incompressible 3D
Navier-Stokes equations with helical symmetry}

\author[Q. Jiu]{Quansen Jiu}
\address[Q. Jiu]{School of Mathematical Sciences, Capital Normal University, Beijing, 100048, P. R. China}
\email{jiuqs@cnu.edu.cn}

\author[M. C. Lopes Filho]{Milton C. Lopes Filho}
\address[M. C. Lopes Filho]{Instituto de Matem\'atica\\
Universidade Federal do Rio de Janeiro\\
Cidade Universit\'aria -- Ilha do Fund\~ao\\
Caixa Postal 68530\\
21941-909 Rio de Janeiro, RJ -- Brasil.}
\email{mlopes@im.ufrj.br}

\author[D. Niu]{Dongjuan Niu}
\address[D. Niu]{School of Mathematical Sciences, Capital Normal University, Beijing,100048, P. R. China}
\email{djniu@cnu.edu.cn}

\author[H. J. Nussenzveig Lopes]{Helena J. Nussenzveig Lopes}
\address[H. J. Nussenzveig Lopes]{Instituto de Matem\'atica\\
Universidade Federal do Rio de Janeiro\\
Cidade Universit\'aria -- Ilha do Fund\~ao\\
Caixa Postal 68530\\
21941-909 Rio de Janeiro, RJ  -- Brasil.}
\email{hlopes@im.ufrj.br}

\begin{document}

\nocite{*}

\begin{abstract}
In this paper, we  are concerned with the vanishing viscosity
problem  for the  three-dimensional Navier-Stokes equations with helical symmetry, in the whole space. We choose viscosity-dependent initial $\bu_0^\nu$ with helical swirl, an analogue of the swirl component of axisymmetric flow, of magnitude $\mathcal{O}(\nu)$ in the $L^2$ norm; we assume $\bu_0^\nu \to \bu_0$ in $H^1$. The new ingredient in our analysis is a decomposition of helical vector fields, through which we obtain the required estimates.

\vspace{.3cm}

 \noindent {\scshape Key words:} Navier-Stokes equations; Euler equations;
 Helical symmetry; Vanishing viscosity limit.

\vspace{.3cm}

\noindent {\scshape 2000 Mathematics Subject Classification.}
76B47; 35Q30.

\vspace{.8cm}

{\it Dedicated to Edriss S. Titi, on the occasion of his $60^{th}$ birthday.}

\end{abstract}

\maketitle

\section{introduction}
The initial-value problem for the three-dimensional incompressible
Navier-Stokes equations with viscosity $\nu>0$ is given by
\begin{align}\label{al}
\left\{
\begin{array}{ll}
\partial_t \mathbf{u}^{\nu}+\mathbf{u}^{\nu}\cdot \nabla \mathbf{u}^{\nu}+\nabla p^{\nu} =\nu
\Delta \mathbf{u}^{\nu} &  ({\rm x},t)\in
\mathbb{R}^3\times (0,\infty), \\[1mm]
\dv\mathbf{u}^{\nu}=0 &  ({\rm x},t)\in
\mathbb{R}^3\times (0,\infty), \\[1mm]
\mathbf{u}^{\nu}(t=0,{\rm x})=\mathbf{u_0}^{\nu} & {\rm x}\in \mathbb{R}^3,
\end{array}
\right.
\end{align}
where $\mathbf{x}=(x,y,z)$, $\mathbf{u}^{\nu}=(u_1^{\nu}, u_2^{\nu}, u_3^{\nu})$ is the velocity and $p^{\nu}$ is the pressure.

Formally, when $\nu=0,$ \eqref{al} becomes the classical
incompressible Euler equations
\begin{align}\label{a12}
\left\{
\begin{array}{ll}
\partial_t \mathbf{u}^0+\mathbf{u}^0\cdot \nabla \mathbf{u}^0+\nabla p^0=0 & ({\rm x},t)\in
\mathbb{R}^3\times (0,\infty),\\[1mm]
\dv \mathbf{u}^0=0 & ({\rm x},t)\in\mathbb{R}^3\times (0,\infty), \\[1mm]
\mathbf{u}^0(t=0,{\rm x})=\mathbf{u}_0 & {\rm x}\in \mathbb{R}^3.
\end{array}
\right.
\end{align}

Global existence of weak solutions and local in time well-posedness of strong solutions for problem
\eqref{al} is due to J. Leray, see \cite{L}. There is a vast literature on existence, uniqueness and
regularity of solutions of \eqref{al}, see \cite{MB,Tem} and references therein. Global existence of
strong solutions and uniqueness of weak solutions remain open.

One direction of investigation has been to study the special case of axisymmetric flows, i.e. viscous
flows which are invariant under rotation around a fixed symmetry axis. In particular, among axisymmetric
flows, one distinguishes the no-swirl case. The axisymmetric velocity has three components, a component
in the direction of the axis of symmetry, a radial component, which is orthogonal to the axis of symmetry,
in any plane that contains it, and the azimuthal component, which points in the direction of the rotation
around the axis. No-swirl means that the azimuthal component of velocity vanishes. Global well-posedness
of strong, axisymmetric, solutions of the Navier-Stokes equations \eqref{al} in the no-swirl case, and in
the swirl case when the domain avoids the symmetry axis,
is due to Ladyzhenskaya, see \cite{Lady}. If the domain contains the symmetry axis, global well-posedness is open,
and singularities may occur, but only on the symmetry axis \cite{CKN}. For blow-up criteria in this case,
see \cite{CL}.

Helical flows are another class of three-dimensional flows with an axis of symmetry. These flows are
invariant under a simultaneous rotation around a symmetry axis and translation along the same axis.
The displacement along the axis after one full turn around the axis is an important parameter of helical
symmetry, which, in this article, is assumed to be of unit length. This class of flows is preserved under both
Navier-Stokes and Euler evolution. The mathematical literature
on helical flows is much less extensive than that of axisymmetric flows, but there is growing recent interest.
Well-posedness of strong solutions to three-dimensional Navier-Stokes with
helical symmetry in bounded domain, was proved by Mahalov,
Titi and Leibovich \cite{MTL} with the initial helical velocity $\mathbf{u}_0^{\nu} \in H^1.$ The key observation in
\cite{MTL} is that the helical flows inherit properties of the two-dimensional flow in the plane, to a greater
extent than axisymmetric flows. Specifically, it is  proved in \cite{MTL} that, for a helical vector field $\mathbf{v}$, the following inequality holds true:
\begin{align}\label{a16+}
\|\mathbf{v}\|_{L^4(\Omega)}\le
C\|\mathbf{v}\|^{\frac12}_{L^2(\Omega)}\|\mathbf{v}\|^{\frac12}_{H^1(\Omega)},
\end{align}
where $C>0$ is a constant and $\Omega=\{(x,y,z)\in \mathbb{R}^3| x^2+y^2<1,
0<z<2\pi\}$  is a cylindrical domain.

In analogy with the notion of swirl in axisymmetric flows, we  define the \textit{helical swirl} of a helical vector $\mathbf{v}$ as
%\begin{align}\label{hesw}
\begin{align}\label{hesw}
\eta:=\mathbf{v}\cdot \boldsymbol{\xi}
\end{align}
with $\boldsymbol{\xi}\equiv(y,-x,1)^T.$
Helical swirl plays an important role in global well-posedness of three-dimensional Euler equations with helical symmetry.
In particular, the helical swirl component satisfies a transport equation  and it is conserved along particle trajectories for Euler flow with
helical symmetry.   Assuming  that the initial velocity field has vanishing helical swirl,  Dutrifoy \cite{D} proved the global existence and
uniqueness of classical solutions of three-dimensional Euler equations with helical symmetry. Ettinger and Titi \cite{ET}
obtained the global well-posedness of strong solutions with the initial vorticity belonging to $L^{\infty}$, which is similar to
Yudovich's well-known result for two-dimensional Euler. Recently, Bronzi, Lopes Filho and Nussenzveig Lopes \cite{BLL} verified  global existence of weak solutions when the initial vorticity belongs to $L^p, p>\frac{4}{3}$ with compact support. Subsequently, Jiu, Li and Niu \cite{JLN} generalized this result to include initial vorticities in $L^1\cap L^p, p>1$. All of the aforementioned results assume the initial data has vanishing helical swirl; the problem of global existence for helical Euler with initial nonzero helical swirl remains open.

In this paper, we intend to focus on the vanishing viscosity problem for three-dimensional Navier-Stokes
equations with helical symmetry in the whole space.
%As a byproduct of our vanishing viscosity result,
%we obtain a new global existence theorem for weak solutions of the
%helically three-dimensional Euler equations without helical swir in $L^\infty(0,T;H^1)$.
%This extends the existence results in \cite{D} and \cite{ET}.
We allow initial data for the Navier-Stokes equations with helical swirl of
magnitude $\mathcal{O}(\nu)$, measured
in $L^2$.  We will see that, for viscous flows, the helical swirl is not conserved along particle
trajectories, and the vanishing of the helical swirl is not preserved under Navier-Stokes evolution.
Controlling the magnitude of the swirl component of velocity is the key aspect of obtaining the vorticity
estimates needed to carry out our analysis.

More precisely, for helical velocity fields $\mathbf{u}^\nu$  the
vorticity has the form of
\begin{align}\label{vorPre}
{\boldsymbol{\omega}^\nu=\curl\mathbf{u}^\nu
=\omega^\nu_3\boldsymbol{\xi}+
\left(\frac{\partial \eta^\nu}{\partial y},-\frac{\partial \eta^\nu}{\partial x},0\right)}, \end{align}
where $\omega_3^\nu=\partial_x u^\nu_2-\partial_y
u^\nu_1$ is the third component of the vorticity and
$\eta^\nu=\mathbf{u}^\nu\cdot\boldsymbol{\xi}$ is the  helical swirl.  The equation for
$\omega_3^{\nu}$ can be written as
\begin{align}\label{voreqs}
\partial_t \omega_3^{\nu}+(\mathbf{u}^{\nu}\cdot \nabla)\omega_3^{\nu}+
{\partial_x \eta^{\nu}\partial_y u^{\nu}_3-\partial_y
\eta^{\nu}\partial_x u_3^{\nu}} =\nu \Delta\omega_3^{\nu}.
\end{align}
Clearly, vortex stretching  terms appear in the above equations
( see the third and forth terms on the left hand side) and we cannot
control them uniformly with respect to the viscosity $\nu$. To
overcome this difficulty, we introduce a decomposition of
helical vector fields  to obtain the desired {\em a priori} estimates
(see \eqref{dec}, Lemma \ref{lem2.1}, and Section 4 for more details). Before we investigate
the convergence of the Navier-Stokes
equations to the Euler equations, we   prove  global existence of
weak, and of strong, helical solutions to the Navier-Stokes equations \eqref{al}
 provided that the initial velocity is helical and belongs
to $L^2$ and $H^1$, respectively. This result  is not
included in the existence result of  \cite{MTL} because our fluid domain is the whole space.
%For some references concerning the vanishing
%viscosity problem in the presence of a boundary appears, the reader is referred to
%\cite{CMR}, \cite{IP}, \cite{IS}, \cite{LLP}, \cite{VC}, \cite{XX}
%and references therein.

This paper is organized as follows. In Section 2 we recall  some useful facts
about helical flows and state our main result. In Section 3 we
present global existence of weak, and strong, solutions to
the three-dimensional helical Navier-Stokes equations in full space, with $L^2$, and $H^1$  initial velocity, respectively.
The key {\em a priori} estimates and the proof of our main result
will be given in Section 4.

\section{Preliminaries and main result}

We begin this section by recalling basic definitions, taken from \cite{ET}, regarding helical symmetry.
Denote by $R_{\theta}$ the rotation  by
an angle $\theta$ around the $z$-axis:
\begin{align}\label{a}
R_{\theta}= \left(
\begin{array}{ccc}
\cos\theta & \sin\theta & 0\\
-\sin\theta& \cos\theta& 0\\
0& 0& 1
\end{array}
\right).
\end{align}

The helical symmetry group $G^{\kappa}$ is a one-parameter
group of isometries of $\mathbb{R}^3$ given by
\begin{align}\label{a13}
G^{\kappa}=\{S_{\theta}:\mathbb{R}^3\longrightarrow
\mathbb{R}^3 \; | \; \theta\in \mathbb{R}\},
\end{align}
where
\begin{align}\label{a14}
S_{\theta}(\mbx)=R_{\theta}(\mbx)+ \left(
\begin{array}{l}
0\\
0\\
\kappa\theta
\end{array}
\right) \equiv \left(
\begin{array}{c}
x\cos \theta+y\sin\theta\\
-x\sin\theta+y\cos\theta\\
z+\kappa \theta
\end{array}
\right),
\end{align}
for $\mbx=(x,y,z)$.
Above, $\kappa$ is a fixed nonzero constant length scale. The transformation
$S_{\theta}$ corresponds to the superposition of a simultaneous rotation around the ${z}$-axis
and a translation along the same ${z}$-axis. A
{\it scalar function} $f:\mathbb{R}^3\longrightarrow \mathbb{R}$ is said to be
helical if
\begin{align}\label{a15}
f(S_{\theta}(\mbx))=f(\mbx), \ \forall \theta\in \mathbb{R}.
\end{align}
A {\it vector field} $\mathbf{v}:\mathbb{R}^3\longrightarrow
\mathbb{R}^3$ is said to be helical, if
\begin{align}\label{a16}
\mathbf{v}(S_{\theta}(\mbx))=R_{\theta}\mathbf{v}(\mbx), \ \forall \theta\in
\mathbb{R}.
\end{align}
Clearly, helical functions and helical vector fields are
periodic in the $z$ direction, with period $2\pi\kappa$.

For simplicity, we will henceforth assume that $\kappa=1$.
By virtue of  the periodicity of helical functions
with respect to the third variable $z$, it is enough to work in the fundamental domain
$\mathcal{D}:=\mathbb{R}^2\times [-\pi,\pi].$  Let
$L^2(\mathcal{D})$ denote the square-integrable functions on $\mathcal{D}$ and let $H^1_{per}(\mathcal{D})$ be the usual $L^2$-based Sobolev
space $H^1$, periodic with respect to $z$, with period $2\pi$; we use the notation $H^2_{per}(\mathcal{D})$ in an analogous manner. We also use
the subscript {\em loc} to denote Sobolev spaces which are local with respect to the horizontal variables $x$ and $y$.

Hereafter we use the notation $c$ and $C$ for generic constants  which are independent of $\nu$.

Below, we state equivalent definitions  of helical functions and
helical vector fields; we refer the reader to
 Claim 2.3 and Claim 2.5 of \cite{ET} for the corresponding proofs.

Set

\begin{equation} \label{xi}
\boldsymbol{\xi} \equiv (y,-x,1)^T.
\end{equation}

\begin{lem}\label{clm1}
A $C^1$ scalar function $f:\mathbb{R}^3
\longrightarrow \mathbb{R}$ is helical if and only if
\begin{align}\label{a17}
y\partial_x f-x\partial_y f+\partial_z f \equiv \boldsymbol{\xi}\cdot\nabla
f=\partial_{\boldsymbol{\xi}}f=0.
\end{align}
\end{lem}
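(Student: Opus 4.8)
The plan is to recognize that the one-parameter family $S_\theta$ is precisely the flow generated by the vector field $\boldsymbol{\xi}$, so that helicity of $f$ --- constancy of $f$ along the orbits $\theta \mapsto S_\theta(\mbx)$ --- is equivalent to the vanishing of the directional derivative $\boldsymbol{\xi}\cdot\nabla f$. The computation underpinning both implications is that, for every point $\mbx$,
\begin{equation*}
\frac{d}{d\theta}S_\theta(\mbx) = \boldsymbol{\xi}(S_\theta(\mbx)),
\end{equation*}
which follows by differentiating the explicit formula \eqref{a14} (with $\kappa=1$) and comparing with \eqref{xi}; in particular, at $\theta=0$ this gives $\frac{d}{d\theta}\big|_{\theta=0}S_\theta(\mbx) = (y,-x,1)^T = \boldsymbol{\xi}(\mbx)$.

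For the forward implication, I would assume $f$ helical, so that $f(S_\theta(\mbx)) = f(\mbx)$ for all $\theta$ by \eqref{a15}. Differentiating this identity in $\theta$ at $\theta = 0$ and applying the chain rule together with the generator formula above yields $\nabla f(\mbx)\cdot\boldsymbol{\xi}(\mbx) = 0$, which is exactly \eqref{a17}. Since $\mbx$ is arbitrary, \eqref{a17} holds everywhere, and the $C^1$ hypothesis is what legitimizes the differentiation.

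For the converse, suppose $\boldsymbol{\xi}\cdot\nabla f \equiv 0$ and fix $\mbx$. Set $g(\theta) := f(S_\theta(\mbx))$. The essential point --- and the one requiring care --- is to evaluate $g'(\theta)$ at an \emph{arbitrary} $\theta$, not merely at the origin. For this I would use the group law $S_{\theta+s} = S_s\circ S_\theta$, which one checks directly from \eqref{a14} using that $R_sR_\theta = R_{s+\theta}$ and that $R_s$ fixes the $z$-axis, and write
\begin{equation*}
g'(\theta) = \frac{d}{ds}\Big|_{s=0} f\big(S_s(S_\theta(\mbx))\big) = \nabla f(S_\theta(\mbx))\cdot\boldsymbol{\xi}(S_\theta(\mbx)) = 0.
\end{equation*}
Hence $g$ is constant, so $f(S_\theta(\mbx)) = g(\theta) = g(0) = f(\mbx)$ for every $\theta$, i.e. $f$ is helical.

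The only genuine obstacle is this converse direction: a naive differentiation of $g$ returns information only at $\theta = 0$, and one must invoke the group structure of $\{S_\theta\}$ to upgrade the infinitesimal condition $\boldsymbol{\xi}\cdot\nabla f = 0$ into constancy along the entire orbit. Everything else reduces to a routine application of the chain rule.
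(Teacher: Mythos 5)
Your proof is correct. The paper offers no proof of this lemma itself, deferring to Claim 2.3 of \cite{ET}, and your argument --- identifying $\boldsymbol{\xi}$ as the infinitesimal generator of the flow $\theta \mapsto S_\theta$, differentiating at $\theta=0$ for the forward direction, and showing $g(\theta)=f(S_\theta(\mbx))$ is constant for the converse --- is precisely that standard argument; the only remark worth making is that since your generator identity $\frac{d}{d\theta}S_\theta(\mbx)=\boldsymbol{\xi}(S_\theta(\mbx))$ already holds at \emph{every} $\theta$ by the direct computation you cite, the converse follows from differentiating $g$ directly, so the appeal to the group law $S_{\theta+s}=S_s\circ S_\theta$, while correct, is not actually needed.
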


\begin{lem}\label{clm2}
A $C^1$ vector field
$\mathbf{v}=(v_1,v_2,v_3)^T:\mathbb{R}^3 \longrightarrow \mathbb{R}^3$ is
helical if and only if it  the following relations hold true:
\begin{align}
&\partial_{\boldsymbol{\xi}}v_1=v_2, \label{a18}\\
&\partial_{\boldsymbol{\xi}}v_2=-v_1, \label{a19}\\
&\partial_{\boldsymbol{\xi}}v_3=0.\label{a20}
\end{align}
\end{lem}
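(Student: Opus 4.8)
The plan is to prove both implications by differentiating the defining relation \eqref{a16} with respect to the group parameter $\theta$. Two preliminary computations drive everything. First, writing $A$ for the infinitesimal generator of the rotations, $A := \frac{d}{d\theta}R_\theta\big|_{\theta=0}$, one reads off directly from \eqref{a} that $R_\theta=\exp(\theta A)$, so that $\frac{d}{d\theta}R_\theta = A R_\theta = R_\theta A$, and moreover that $A\mathbf{w}=(w_2,-w_1,0)^T$ for every $\mathbf{w}=(w_1,w_2,w_3)^T$. Second, differentiating \eqref{a14} in $\theta$ and comparing with \eqref{xi}, one finds the crucial fact that $\boldsymbol{\xi}$ is exactly the velocity field of the flow $S_\theta$, namely
\begin{equation}
\frac{d}{d\theta}S_\theta(\mbx)=\boldsymbol{\xi}(S_\theta(\mbx)),\qquad\text{so in particular}\quad \frac{d}{d\theta}S_\theta(\mbx)\Big|_{\theta=0}=\boldsymbol{\xi}(\mbx).
\end{equation}

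For the ``only if'' direction I assume \eqref{a16} and differentiate both sides in $\theta$ at $\theta=0$. The chain rule together with the identity above turns the left-hand side into $D\mathbf{v}(\mbx)\,\boldsymbol{\xi}(\mbx)=(\partial_{\boldsymbol{\xi}}v_1,\partial_{\boldsymbol{\xi}}v_2,\partial_{\boldsymbol{\xi}}v_3)^T$, since the $i$-th component of $D\mathbf{v}\,\boldsymbol{\xi}$ is $\boldsymbol{\xi}\cdot\nabla v_i=\partial_{\boldsymbol{\xi}}v_i$; meanwhile the right-hand side, with $\mathbf{v}(\mbx)$ frozen in $\theta$, becomes $A\mathbf{v}(\mbx)=(v_2,-v_1,0)^T$. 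Equating components yields exactly \eqref{a18}, \eqref{a19}, \eqref{a20}.

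For the converse I assume \eqref{a18}--\eqref{a20}, recast compactly as $\partial_{\boldsymbol{\xi}}\mathbf{v}=A\mathbf{v}$, and show that the defining relation then holds for every $\theta$. Fix $\mbx$ and set $\Phi(\theta):=R_{-\theta}\,\mathbf{v}(S_\theta(\mbx))$, so that $\Phi(0)=\mathbf{v}(\mbx)$ and relation \eqref{a16} is equivalent to $\Phi\equiv\Phi(0)$. Differentiating with the product rule, using $\frac{d}{d\theta}R_{-\theta}=-R_{-\theta}A$ on the first factor and the flow identity plus the hypothesis on the second, one gets $\frac{d}{d\theta}\mathbf{v}(S_\theta(\mbx))=(\partial_{\boldsymbol{\xi}}\mathbf{v})(S_\theta(\mbx))=A\,\mathbf{v}(S_\theta(\mbx))$; hence the two terms of $\Phi'(\theta)$ cancel and $\Phi'\equiv 0$. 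Since $\mathbf{v}$ is $C^1$, $\Phi$ is $C^1$, so $\Phi$ is constant and \eqref{a16} follows.

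The only genuine subtlety lies in the converse: the pointwise differential identities must be propagated to the finite-$\theta$ group relation, and this is precisely the ODE-uniqueness step carried out through the auxiliary function $\Phi$. The commutation $A R_{-\theta}=R_{-\theta}A$ (equivalently $R_\theta=\exp(\theta A)$) and the flow identity are what make the two terms of $\Phi'$ cancel exactly; checking them is routine linear algebra. Regularity enters only to license the chain rule and to deduce constancy of $\Phi$ from $\Phi'\equiv 0$, which is exactly why the $C^1$ hypothesis suffices.
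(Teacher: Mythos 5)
Your proof is correct: the generator identity $A\mathbf{w}=(w_2,-w_1,0)^T$, the flow identity $\frac{d}{d\theta}S_\theta(\mbx)=\boldsymbol{\xi}(S_\theta(\mbx))$ (valid here since $\kappa=1$), and the cancellation in $\Phi'(\theta)$ for $\Phi(\theta)=R_{-\theta}\mathbf{v}(S_\theta(\mbx))$ all check out, and the $C^1$ hypothesis is exactly what licenses the chain rule and the constancy of $\Phi$. The paper gives no proof of its own, deferring to Claims 2.3 and 2.5 of \cite{ET}, and your argument---differentiating \eqref{a16} at $\theta=0$ for necessity and propagating the infinitesimal relation to all $\theta$ via the ODE/constancy step for sufficiency---is essentially the standard proof given in that reference.
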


Next we recall the relation between three-dimensional helical vector fields and their two-dimensional traces on ``slices" $z\,=\,$constant, as discussed in \cite{LMNNT}. Recall that we are assuming $\kappa =1$ so, in the notation of \cite{LMNNT}, $\sigma = 2\pi$.

\begin{lem} \label{twodimpro}
	Set $\mbx=(x_1,x_2,x_3)$. Let $\mathbf{v} = \mathbf{v}(\mbx) $, be a smooth helical vector field and
	let $p=p(\mbx)$ be a smooth helical function. Then there exist {\em unique\/} $\mathbf{w}=(w^1,w^2,w^3)
	=(w^1,w^2,w^3)(y_1,y_2)$ and $q=q(y_1,y_2)$ such that
	\begin{equation} \label{uandpinY}
	\mathbf{v}(\mbx) = R_{ x_3}\mathbf{w}(\mby(\mbx)), \;\;\; p=p(\mbx)=q(\mby(\mbx)),\end{equation}
	with $R_\theta$ given in \eqref{a},
	and
	\begin{equation} \label{Yofx}
	\mby(\mbx) =
	\left[\begin{array}{l}
	y_1 \\ \\
	y_2
	\end{array}\right]
	=
	\left[\begin{array}{cc}
	\cos x_3  & -\sin   x_3  \\ \\
	\sin x_3  & \cos x_3
	\end{array}\right]
	\left[\begin{array}{l}
	x_1\\ \\
	x_2
	\end{array}\right].
	\end{equation}
	
	Conversely, if $\mathbf{v}$ and $p$ are defined through \eqref{uandpinY} for some
	$\mathbf{w}=\mathbf{w}(y_1,y_2)$, $q=q(y_1,y_2)$,
	then $\mathbf{v}$ is a helical vector field and $p$ is a helical scalar function.
\end{lem}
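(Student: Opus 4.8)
The plan is to prove the forward direction by an explicit construction: given a helical $\mathbf{v}$, the natural candidate for $\mathbf{w}$ is simply the restriction of $\mathbf{v}$ to the slice $x_3=0$, and likewise $q$ is the restriction of $p$. The entire argument then rests on the geometric observation that $S_{-x_3}$ carries the point $\mbx$ onto that slice. Indeed, with $\kappa=1$, the third component of $S_{-x_3}(\mbx)$ is $x_3+(-x_3)=0$ by \eqref{a14}, while its first two components agree with those of $R_{-x_3}\mbx$; inspecting the upper-left block of $R_{-x_3}$ in \eqref{a} shows these to be exactly $\mby(\mbx)$ as given in \eqref{Yofx}. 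Hence $S_{-x_3}(\mbx)=(\mby(\mbx),0)$.

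First I would apply the helicity relation \eqref{a16} with $\theta=-x_3$ to obtain $\mathbf{v}((\mby(\mbx),0))=\mathbf{v}(S_{-x_3}(\mbx))=R_{-x_3}\mathbf{v}(\mbx)$, and then invert to get $\mathbf{v}(\mbx)=R_{x_3}\mathbf{v}((\mby(\mbx),0))$. Defining $\mathbf{w}(y_1,y_2):=\mathbf{v}(y_1,y_2,0)$ reproduces \eqref{uandpinY}; the analogous step using \eqref{a15} gives $p(\mbx)=p((\mby(\mbx),0))$, so I set $q(y_1,y_2):=p(y_1,y_2,0)$. Smoothness of $\mathbf{w}$ and $q$ is immediate, as they are traces of smooth functions on a coordinate hyperplane. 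Uniqueness is equally transparent: evaluating any representation of the form \eqref{uandpinY} at $x_3=0$ forces $\mathbf{w}=\mathbf{v}(\cdot,\cdot,0)$ and $q=p(\cdot,\cdot,0)$, since $R_0$ is the identity and $\mby(x_1,x_2,0)=(x_1,x_2)$.

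For the converse, given smooth $\mathbf{w},q$ I would define $\mathbf{v},p$ through \eqref{uandpinY} and verify the symmetry identities directly. The two algebraic facts I need are the homomorphism property $R_\alpha R_\beta=R_{\alpha+\beta}$, clear from \eqref{a}, and the $S_\theta$-invariance $\mby(S_\theta\mbx)=\mby(\mbx)$. The latter holds because the upper-left block of $R_\theta$ is the planar rotation by $-\theta$, so from $(S_\theta\mbx)_3=x_3+\theta$ and the composition of planar rotations one finds that the $\theta$-dependence cancels. With these in hand,
\[
\mathbf{v}(S_\theta\mbx)=R_{x_3+\theta}\mathbf{w}(\mby(S_\theta\mbx))=R_\theta R_{x_3}\mathbf{w}(\mby(\mbx))=R_\theta\mathbf{v}(\mbx),
\]
which is \eqref{a16}, while $p(S_\theta\mbx)=q(\mby(S_\theta\mbx))=q(\mby(\mbx))=p(\mbx)$ is \eqref{a15}.

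I do not anticipate a serious obstacle: the content is entirely the fact that $S_{-x_3}$ projects $\mbx$ onto the slice $x_3=0$, combined with the composition law for rotations. The only point demanding care is the consistent bookkeeping of rotation angles and signs between \eqref{a} and \eqref{Yofx}—in particular, confirming that the matrix defining $\mby$ is the inverse of the upper-left block of $R_{x_3}$—since a sign error there would break the $\mby$-invariance on which the converse rests.
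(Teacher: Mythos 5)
Your proof is correct, and every one of the sign checks it hinges on goes through with the paper's conventions: the upper-left block of $R_\theta$ in \eqref{a} is the planar rotation by $-\theta$, the matrix in \eqref{Yofx} is the planar rotation by $+x_3$ (hence its inverse), so indeed $S_{-x_3}(\mbx)=(\mby(\mbx),0)$, $\mby(S_\theta\mbx)=\mby(\mbx)$, and $R_\alpha R_\beta=R_{\alpha+\beta}$, which is all you need for both directions and for uniqueness via evaluation at $x_3=0$. Note that the paper itself offers no proof of this lemma --- it states that it is precisely Proposition 2.1 of \cite{LMNNT} (with $\sigma=2\pi$) and refers the reader there --- so there is no internal argument to compare against; your trace-on-the-slice construction ($\mathbf{w}(y_1,y_2):=\mathbf{v}(y_1,y_2,0)$, $q(y_1,y_2):=p(y_1,y_2,0)$) is the natural and standard one for this statement, and it has the merit of making the lemma self-contained within the paper's own definitions \eqref{a15}--\eqref{a16}. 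One harmless observation: in the converse you could also record that $\mathbf{v}$ and $p$ inherit smoothness from $\mathbf{w}$ and $q$ by composition, though the statement as written only asserts the symmetry properties, so nothing is missing.
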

This is precisely Proposition 2.1 in \cite{LMNNT}, in the case $\sigma = 2\pi$, to which we refer the reader for the proof.

Next we will formally introduce the helical swirl, a quantity which plays an
important role in helical flows.

\begin{defn} \label{hesw}
Let $\mathbf{v}$ be a helical vector field.
The helical swirl is defined to be
\[\eta\equiv \mathbf{v}\cdot\boldsymbol{\xi}.\]
\end{defn}

Vorticity, the curl of the velocity field, is a key object in the study of incompressible fluid flow. For helical vector fields, vorticity has a special form.

\begin{lem}\label{lem2.4}
Let $\mathbf{v}$ be a helical vector field. Then its curl, $\boldsymbol{\omega} = \mbox{ curl } \mathbf{v} = (\omega_1, \omega_2, \omega_3)$, is given by
\begin{equation} \label{formforomega}
\boldsymbol{\omega}= \omega_3 \boldsymbol{\xi}+(\partial_y \eta, -\partial_x \eta,0).
\end{equation}
\end{lem}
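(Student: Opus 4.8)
The plan is to verify the identity componentwise by a direct computation, using the characterization of helical vector fields from Lemma \ref{clm2}. Writing $\mathbf{v}=(v_1,v_2,v_3)$, the curl is
\begin{equation*}
\boldsymbol{\omega}=\curl\mathbf{v}=(\partial_y v_3-\partial_z v_2,\ \partial_z v_1-\partial_x v_3,\ \partial_x v_2-\partial_y v_1),
\end{equation*}
so that $\omega_3=\partial_x v_2-\partial_y v_1$ by definition. Since $\boldsymbol{\xi}=(y,-x,1)^T$, the third component of the claimed identity reads $\omega_3=\omega_3\cdot 1+0$ and is automatic; only the first two components, namely $\omega_1=\omega_3\, y+\partial_y\eta$ and $\omega_2=-\omega_3\, x-\partial_x\eta$, require work.

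First I would recall that $\eta=\mathbf{v}\cdot\boldsymbol{\xi}=y v_1-x v_2+v_3$ and record its horizontal derivatives,
\begin{align*}
\partial_y\eta&=v_1+y\partial_y v_1-x\partial_y v_2+\partial_y v_3,\\
\partial_x\eta&=y\partial_x v_1-v_2-x\partial_x v_2+\partial_x v_3.
\end{align*}
Substituting these into $\omega_3\, y+\partial_y\eta$ and $-\omega_3\, x-\partial_x\eta$, the $\partial_y v_1$ and $\partial_x v_2$ terms cancel internally, and comparing with the expressions for $\omega_1$ and $\omega_2$ above lets me cancel the common $\partial_y v_3$ and $\partial_x v_3$ terms. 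After this bookkeeping the two component identities reduce, respectively, to
\begin{align*}
y\partial_x v_2-x\partial_y v_2+\partial_z v_2&=-v_1,\\
y\partial_x v_1-x\partial_y v_1+\partial_z v_1&=v_2.
\end{align*}

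The key observation is that the left-hand sides of these two relations are exactly $\partial_{\boldsymbol{\xi}}v_2$ and $\partial_{\boldsymbol{\xi}}v_1$, since $\partial_{\boldsymbol{\xi}}=\boldsymbol{\xi}\cdot\nabla=y\partial_x-x\partial_y+\partial_z$. Hence the reduced identities are precisely relations \eqref{a19} and \eqref{a18} of Lemma \ref{clm2}, which hold because $\mathbf{v}$ is helical, and this closes the argument. There is no genuine obstacle in the computation itself; the only conceptual point, and the reason the helical hypothesis is essential, is that the curl naively involves the vertical derivatives $\partial_z v_1$ and $\partial_z v_2$, which are not free data. Helical symmetry, through \eqref{a18}--\eqref{a19}, converts these vertical derivatives into exactly the horizontal combinations that reassemble into $\omega_3\boldsymbol{\xi}$ together with $(\partial_y\eta,-\partial_x\eta,0)$, which is the content of \eqref{formforomega}.
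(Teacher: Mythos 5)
Your proof is correct: the componentwise computation checks out, and the reductions you obtain are exactly relations \eqref{a19} and \eqref{a18} of Lemma \ref{clm2}, as you say. This is precisely the ``straightforward calculation'' the paper invokes without detail, so you have simply carried out the paper's intended argument explicitly.
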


\begin{proof}
The result follows by a straightforward calculation.
\end{proof}

\begin{rem} \label{omega3}
We note that, in view of Lemma \ref{lem2.4}, if $\mathbf{v}$ is a helical vector field for which the helical swirl vanishes then
\[\mbox{ curl }\mathbf{v}  = (\mbox{ curl }\mathbf{v})_3\boldsymbol{\xi} \equiv (\partial_x v_2 - \partial_y v_1)\boldsymbol{\xi}.\]
\end{rem}

Let $\mathbf{v}$ be a helical vector field. We introduce a decomposition of $\mathbf{v}$ into two other helical vector fields, one of which is orthogonal to the symmetry lines of the helical symmetry group $G^1$. Let $\mathbf{V}$ be defined through the equation
\begin{equation} \label{dec}
\mathbf{v} \equiv \mathbf{V} + \eta \frac{\boldsymbol{\xi} }{|\boldsymbol{\xi}|^2},
\end{equation}
where $\eta$ is the helical swirl introduced in \eqref{hesw}.

\begin{lem}\label{lem2.1}
Let $\boldsymbol{v}$ be a  helical vector field and consider the decomposition \eqref{dec}. Then $\mathbf{V}$ is also a helical vector field. In addition, $\mathbf{V}$ satisfies
$$
\mathbf{V}\cdot \boldsymbol{\xi}=0.
$$
Moreover, if $\mathbf{v}$ is divergence free,  $\mathbf{V}$ is also divergence free.
\end{lem}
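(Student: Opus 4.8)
The plan is to verify the three assertions in turn, using throughout the differential characterizations of helicity in Lemmas \ref{clm1} and \ref{clm2}. The orthogonality statement is immediate and I would dispatch it first: since $\mathbf{V} = \mathbf{v} - \eta\,\boldsymbol{\xi}/|\boldsymbol{\xi}|^2$, taking the inner product with $\boldsymbol{\xi}$ gives
\[
\mathbf{V}\cdot\boldsymbol{\xi} = \mathbf{v}\cdot\boldsymbol{\xi} - \eta\,\frac{\boldsymbol{\xi}\cdot\boldsymbol{\xi}}{|\boldsymbol{\xi}|^2} = \eta - \eta = 0,
\]
where I used the definition $\eta = \mathbf{v}\cdot\boldsymbol{\xi}$ from Definition \ref{hesw}.

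For the helicity of $\mathbf{V}$, I would first observe that the defining relation \eqref{a16} is linear in the vector field, so helical vector fields form a linear space; hence it suffices to show that the correction term $\eta\,\boldsymbol{\xi}/|\boldsymbol{\xi}|^2$ is itself helical, since $\mathbf{v}$ is helical by hypothesis. I would assemble this from three elementary facts, each a one-line check against the criteria above. First, $\boldsymbol{\xi} = (y,-x,1)^T$ is a helical vector field: direct substitution into \eqref{a18}--\eqref{a20} gives $\partial_{\boldsymbol{\xi}} y = -x$, $\partial_{\boldsymbol{\xi}}(-x) = -y$, and $\partial_{\boldsymbol{\xi}} 1 = 0$. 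Second, both $|\boldsymbol{\xi}|^2 = x^2+y^2+1$ and the swirl $\eta = yv_1 - xv_2 + v_3$ are helical scalars: for the former $\partial_{\boldsymbol{\xi}}(x^2+y^2+1) = 2xy - 2xy = 0$, while for the latter the product rule combined with \eqref{a18}--\eqref{a20} produces a cancellation giving $\partial_{\boldsymbol{\xi}}\eta = 0$, so both are helical by Lemma \ref{clm1}; consequently $\phi := \eta/|\boldsymbol{\xi}|^2$ is a helical scalar as well. Third, the product of a helical scalar $\phi$ and a helical vector field $\mathbf{w}$ is again helical, because $\partial_{\boldsymbol{\xi}}(\phi w_i) = \phi\,\partial_{\boldsymbol{\xi}} w_i$ reproduces exactly the relations \eqref{a18}--\eqref{a20} for $\phi\mathbf{w}$. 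Applying this with $\mathbf{w} = \boldsymbol{\xi}$ shows $\eta\,\boldsymbol{\xi}/|\boldsymbol{\xi}|^2 = \phi\,\boldsymbol{\xi}$ is helical, and therefore so is $\mathbf{V}$.

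For the divergence, writing again $\phi = \eta/|\boldsymbol{\xi}|^2$, I would use the product identity $\dv(\phi\,\boldsymbol{\xi}) = \phi\,\dv\boldsymbol{\xi} + \boldsymbol{\xi}\cdot\nabla\phi$. The first term vanishes since $\dv\boldsymbol{\xi} = \partial_x y - \partial_y x + \partial_z 1 = 0$, and the second term is precisely $\partial_{\boldsymbol{\xi}}\phi$, which vanishes because $\phi$ was shown above to be a helical scalar. Hence $\dv\!\big(\eta\,\boldsymbol{\xi}/|\boldsymbol{\xi}|^2\big) = 0$, and subtracting this from $\dv\mathbf{v} = 0$ in the decomposition \eqref{dec} yields $\dv\mathbf{V} = 0$.

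I do not anticipate a genuine obstacle: every step reduces to an application of the differential criteria of Lemmas \ref{clm1} and \ref{clm2}. The only point requiring mild care is the product-rule computation showing $\partial_{\boldsymbol{\xi}}\eta = 0$, where the cross terms arising from $\partial_{\boldsymbol{\xi}} y = -x$ and $\partial_{\boldsymbol{\xi}}(-x) = -y$ must be checked to cancel against the relations $\partial_{\boldsymbol{\xi}} v_1 = v_2$ and $\partial_{\boldsymbol{\xi}} v_2 = -v_1$; this is the one place where the specific structure of $\boldsymbol{\xi}$ enters essentially, and it is what makes the two non-swirl facts ($\dv\boldsymbol{\xi}=0$ and $\boldsymbol{\xi}$ helical) fit together so that the correction term is both helical and divergence free.
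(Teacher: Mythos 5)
Your proposal is correct and takes essentially the same approach as the paper: both arguments reduce everything to the differential criteria of Lemmas \ref{clm1} and \ref{clm2}, verify that $\eta$ and $|\boldsymbol{\xi}|^2$ (hence $\eta/|\boldsymbol{\xi}|^2$) are helical scalars, deduce that the correction term $\eta\,\boldsymbol{\xi}/|\boldsymbol{\xi}|^2$ is a helical vector field, and obtain the divergence-free statement from $\dv\boldsymbol{\xi}=0$ together with $\partial_{\boldsymbol{\xi}}\bigl(\eta/|\boldsymbol{\xi}|^2\bigr)=0$. Your slightly more modular packaging (linearity of the helicity condition, plus the product rule for a helical scalar times the helical field $\boldsymbol{\xi}$) is merely a reorganization of the paper's direct computation of $\partial_{\boldsymbol{\xi}}\bigl(\eta\,\boldsymbol{\xi}/|\boldsymbol{\xi}|^2\bigr)$.
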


\begin{proof}
As $\mathbf{v}$ is helical, we have, thanks to Lemma \ref{clm2},
$\partial_{\boldsymbol{\xi}}\mathbf{v}=(v_2,-v_1,0)^t$. Now, a direct calculation using Lemma \ref{clm2}, together with the expression for $\boldsymbol{\xi}$, yields
\[\partial_{\boldsymbol{\xi}}\eta = \partial_{\boldsymbol{\xi}}(\mathbf{v}\cdot \boldsymbol{\xi}) = (\partial_{\boldsymbol{\xi}}\mathbf{v})\cdot \boldsymbol{\xi} + \mathbf{v} \cdot \partial_{\boldsymbol{\xi}} \boldsymbol{\xi} = 0.\]
Hence, by Lemma \ref{clm1}, $\eta$ is also helical.

Furthermore, we have
\[
\partial_{\boldsymbol{\xi}}\left(\eta \frac{\boldsymbol{\xi}}{|\boldsymbol{\xi}|^2}\right) \]
\[= \frac{\eta}{|\boldsymbol{\xi}|^2}\partial_{\boldsymbol{\xi}} \boldsymbol{\xi} + \boldsymbol{\xi} \partial_{\boldsymbol{\xi}}\left(\frac{\eta}{|\boldsymbol{\xi}|^2}\right) \]
\[= \frac{\eta}{|\boldsymbol{\xi}|^2}\partial_{\boldsymbol{\xi}} \boldsymbol{\xi} = \left( \frac{(\eta\boldsymbol{\xi})_2}{|\boldsymbol{\xi}|^2},-\frac{(\eta\boldsymbol{\xi})_1}{|\boldsymbol{\xi}|^2},0  \right).\]
Therefore, by Lemma \ref{clm2}, it follows that $\eta \boldsymbol{\xi}/|\boldsymbol{\xi}|^2$ is a helical vector field. Consequently, $\mathbf{V}$ is a helical vector field.

In addition, a simple calculation yields
\[\mathbf{V}\cdot \boldsymbol{\xi}=(\mathbf{v}-\eta\frac{\boldsymbol{\xi}}{|\boldsymbol{\xi}|^2})\cdot \boldsymbol{\xi}=0.\]

Finally, suppose that $\mathbf{v}$ is divergence free. Then it follows that
\[\mbox{ div }\left(\eta \frac{\boldsymbol{\xi}}{|\boldsymbol{\xi}|^2}\right)=\partial_{\boldsymbol{\xi}}\left(\frac{\eta}{|\boldsymbol{\xi}|^2}\right) + \frac{\eta}{|\boldsymbol{\xi}|^2}\mbox{ div }\boldsymbol{\xi}=0.\]

Thus we obtain that $\mathbf{V}$ is divergence free as well.

\end{proof}

\begin{rem} \label{Omega3}
Suppose that $\mathbf{v}$ is a helical vector field and let $\mathbf{V}$ be as in \eqref{dec}. Then, since $\mathbf{V}$ is helical and has vanishing helical swirl, it follows that its vorticity, $\mbox{ curl }\mathbf{V} = \boldsymbol{\Omega}$ is given by
\[\boldsymbol{\Omega} = \mbox{ curl } \mathbf{V} =(\partial_x V_2-\partial_y V_1)\boldsymbol{\xi}.\]
See Remark \ref{omega3} for details. Therefore it follows from \eqref{dec}, together with Lemma \ref{lem2.4}, that the third component of the vorticity $\mbox{ curl } \mathbf{v} = \boldsymbol{\omega}$ is given by
\[\omega_3=\Omega_3 +\left( \mbox{ curl } \left( \frac{\eta}{|\boldsymbol{\xi}|^2} \boldsymbol{\xi} \right) \right)_3 \]
\[= \Omega_3 +\partial_x\left(\frac{-\eta x}{|\boldsymbol{\xi}|^2}\right)-\partial_y\left(\frac{\eta y}{|\boldsymbol{\xi}|^2}\right),\]
i.e.
\begin{equation} \label{formforomega3}
\omega_3 = (\partial_x V_2-\partial_y V_1)
-\partial_x\left(\frac{\eta x}{|\boldsymbol{\xi}|^2}\right)-\partial_y\left(\frac{\eta y}{|\boldsymbol{\xi}|^2}\right).
\end{equation}

\end{rem}

We will make use of the following Ladyzhenskaya inequality, valid for helical vector fields, see also \cite{LN}, \cite{Lady} and \cite{MTL}. We give a sketch of the proof for the sake of completeness.

\begin{lem} \label{lem2.2}
There exists a constant $C>0$ such that, for every helical function $\bv\in H^1_{per}(\mathcal{D})$, it holds that
\begin{align} \label{Lady}
\|\bv\|_{L^4(\mathcal{D})}\leq C\|\bv\|_{L^2(\mathcal{D})}^{\frac 1 2}
\|\nabla\bv\|_{L^2(\mathcal{D})}^{\frac 1 2}.
\end{align}
\end{lem}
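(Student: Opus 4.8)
The plan is to reduce the inequality to the classical two-dimensional Ladyzhenskaya inequality on $\real^2$, exploiting the fact recorded in Lemma~\ref{twodimpro} that a helical function is, slice by slice, a rotated copy of a single two-dimensional profile. By density it suffices to treat smooth $\bv$. Moreover, since the modulus of a helical vector field is a helical \emph{scalar} function and the weak Kato-type inequality $\lvert \nabla \lvert \bv \rvert \rvert \le \lvert \nabla \bv \rvert$ holds for $H^1$ fields, it is enough to prove the estimate for a helical scalar function $f$, which I now address through the two-dimensional trace.

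Writing $f(\mbx) = q(\mby(\mbx))$ with $\mby(\mbx)$ the rotation of the horizontal variables given in \eqref{Yofx}, the first step is to observe that for each fixed $x_3$ the map $(x_1,x_2)\mapsto \mby$ is a rigid rotation of $\real^2$, hence measure preserving. Consequently, for any exponent $p$,
\begin{equation*}
\|f\|_{L^p(\mathcal{D})}^p = \int_{-\pi}^{\pi}\!\!\int_{\real^2} |q(\mby)|^p \, d\mby \, dx_3 = 2\pi \, \|q\|_{L^p(\real^2)}^p,
\end{equation*}
so that $\|f\|_{L^p(\mathcal{D})} = (2\pi)^{1/p}\|q\|_{L^p(\real^2)}$; in particular this identifies the $L^4$ and $L^2$ norms of $f$ with those of $q$ up to explicit constants.

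The second step is to control the gradient. Differentiating $f(\mbx)=q(\mby(\mbx))$ in the horizontal variables and using that the Jacobian of the slice rotation is orthogonal, I would show that the horizontal gradient satisfies $|\nabla_{x_1,x_2} f(\mbx)| = |\nabla_{\mby} q(\mby(\mbx))|$ pointwise. Integrating over $\mathcal{D}$ and again using that the slice map is measure preserving gives
\begin{equation*}
\|\nabla_{\mby} q\|_{L^2(\real^2)}^2 = \frac{1}{2\pi}\|\nabla_{x_1,x_2} f\|_{L^2(\mathcal{D})}^2 \le \frac{1}{2\pi}\|\nabla f\|_{L^2(\mathcal{D})}^2,
\end{equation*}
where the inequality simply discards the nonnegative $\partial_{x_3} f$ contribution. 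This direction of the inequality is exactly what is needed.

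Finally, I would invoke the standard two-dimensional Ladyzhenskaya inequality $\|q\|_{L^4(\real^2)} \le C \|q\|_{L^2(\real^2)}^{1/2}\|\nabla_{\mby} q\|_{L^2(\real^2)}^{1/2}$ and substitute the three relations above; all the powers of $2\pi$ combine into a single harmless constant, yielding $\|f\|_{L^4(\mathcal{D})} \le C \|f\|_{L^2(\mathcal{D})}^{1/2}\|\nabla f\|_{L^2(\mathcal{D})}^{1/2}$, as claimed. The only genuinely delicate point is the gradient bookkeeping of the second step: one must verify that the rotation preserves the length of the horizontal gradient and that passing from the horizontal gradient to the full gradient goes in the favorable direction. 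Everything else is a change of variables together with an appeal to the planar inequality, and the reduction from the vector-valued statement to the scalar one through $|\bv|$.
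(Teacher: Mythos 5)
Your proposal is correct and follows essentially the same route as the paper: both reduce the estimate to the planar Ladyzhenskaya inequality through the slice representation of Lemma \ref{twodimpro}, using that the slice map is an orthogonal, measure-preserving rotation to identify the $L^p$ norms and to bound $\|\nabla_y q\|_{L^2(\real^2)}$ by the horizontal part of the full gradient of $\bv$. The only difference is your preliminary reduction to the scalar $f=|\bv|$ via Kato's inequality, which the paper skips by applying the planar inequality directly to the vector trace $\bw$ (orthogonality of $R_{x_3}$ handles the vector case at no extra cost); this detour is harmless, though note that $|\bv|$ is merely $H^1$ even when $\bv$ is smooth, so your density remark should be invoked to justify the representation $f=q(\mby(\mbx))$ for $H^1$ helical scalars rather than to assume smoothness of $f$.
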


\begin{proof}[Proof of Lemma \ref{lem2.2}]
Let $\bv \in H^1_{per}(\mathcal{D})$ and consider the vector field $\mathbf{w}$ given in Lemma \ref{twodimpro}, satisfying
\eqref{uandpinY}.
Since $R_{x_3}$ is an orthogonal matrix, we find
$$
|\mathbf{v}(x_1,x_2,x_3)|^2=|\mathbf{w}(y_1,y_2)|^2,
$$
and, hence,
\begin{align}\label{ineq1}
\|\mathbf{w}\|_{L^p(\mathbb{R}^2)}= \frac{1}{\sqrt[p]{2\pi}}\|\mathbf{v}\|_{L^p(\mathbb{R}^2\times [-\pi,\pi])}.
\end{align}
Therefore, using the two-dimensional Ladyzhenskaya inequality (see \cite{Lady}, \cite{S}), we obtain
\begin{align}\label{ineq3}
\|\mathbf{v}\|_{L^4(\mathbb{R}^2\times [-\pi,\pi])}
= \sqrt[4]{2\pi}\|\mathbf{w}\|_{L^4(\mathbb{R}^2)}
\leq c[\|\mathbf{w}\|_{L^2(\mathbb{R}^2)}^{\frac 1 2}\|\nabla_y \mathbf{w}\|_{L^2(\mathbb{R}^2)}^{\frac 1 2}].
\end{align}

Thus, to prove \eqref{Lady}, it suffices to note that, for each $x_3 \in (-\pi,\pi)$, relation \eqref{uandpinY} and \eqref{Yofx} can be inverted, so that
\begin{align}\label{wu}
\mathbf{w}(\mby)= R_{x_3}^{-1}\mathbf{v}(\mbx(\mby)),
\end{align}
with
\begin{align}\label{wu1}
R_{x_3}^{-1}=\left[ {\begin{array}{*{20}c}
	\cos x_3&-\sin x_3&0 \\
	\sin x_3&\cos x_3&0\\
	0&0&1\\
	\end{array} } \right]	
\end{align}
and
\begin{equation} \label{wu2}
\mbx(\mby) =
\left[\begin{array}{l}
x_1 \\ \\
x_2
\end{array}\right]
=
\left[\begin{array}{cc}
\cos x_3 & \sin x_3 \\ \\
-\sin x_3 & \cos x_3
\end{array}\right]
\left[\begin{array}{l}
y_1\\ \\
y_2
\end{array}\right].
\end{equation}
Hence, in view of \eqref{wu}-\eqref{wu2}, it follows that, for some $C>0$,
\begin{align}\label{ineq2}
\|\nabla_{y} \mathbf{w}\|_{L^2(\mathbb{R}^2)}\leq C\|\nabla  \mathbf{v}\|_{L^2(\mathbb{R}^2\times [-\pi,\pi])}.
\end{align}

We conclude by substituting \eqref{ineq1} and \eqref{ineq2} into \eqref{ineq3}.

\end{proof}

Throughout this paper we will make use of the following estimate.

\begin{lem}\label{lem2.3}
Let $\mathbf{v}\in H^1_{per}(\mathcal{D})$ be a  helical vector field. Then
\begin{align}\label{grad}
\|\nabla \mathbf{v}\|_{L^2(\mathcal{D})}
\leq \|\dv \mathbf{v}\|_{L^2(\mathcal{D})}+\|\curl \mathbf{v}\|_{L^2(\mathcal{D})}.
\end{align}
\end{lem}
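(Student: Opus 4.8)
The plan is to prove the identity $\|\nabla\mathbf{v}\|_{L^2}^2 = \|\dv\mathbf{v}\|_{L^2}^2 + \|\curl\mathbf{v}\|_{L^2}^2$ via integration by parts, which is the standard route in the periodic/whole-space setting and immediately yields the desired inequality (indeed a sharper equality). The starting point is the well-known pointwise vector-calculus identity $|\nabla\mathbf{v}|^2 = (\dv\mathbf{v})^2 + |\curl\mathbf{v}|^2 + \dv\bigl[(\mathbf{v}\cdot\nabla)\mathbf{v} - (\dv\mathbf{v})\mathbf{v}\bigr]$, equivalently the component identity $\sum_{i,j}(\partial_j v_i)^2 = (\dv\mathbf{v})^2 + |\curl\mathbf{v}|^2 + \sum_{i,j}\partial_j(v_i\partial_i v_j - v_i\partial_j v_i)$. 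Integrating this over the fundamental domain $\mathcal{D} = \mathbb{R}^2\times[-\pi,\pi]$, the total-divergence term integrates to zero, so that $\|\nabla\mathbf{v}\|_{L^2(\mathcal{D})}^2 = \|\dv\mathbf{v}\|_{L^2(\mathcal{D})}^2 + \|\curl\mathbf{v}\|_{L^2(\mathcal{D})}^2$, from which \eqref{grad} follows since $\sqrt{a^2+b^2}\le a+b$ for $a,b\ge0$.

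The order of steps I would carry out: first reduce to smooth, compactly supported (in the horizontal variables) helical fields by density in $H^1_{per}(\mathcal{D})$, so that all the integrations by parts are justified and all boundary contributions are controlled. Second, write out the integration by parts on the divergence term $\int_{\mathcal{D}}\partial_j(v_i\partial_i v_j - v_i\partial_j v_i)\,d\mbx$, treating separately the two faces $x_3 = \pm\pi$ and the decay at infinity in $(x_1,x_2)$. Third, assemble the pointwise identity and integrate to conclude.

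The main obstacle is justifying the vanishing of the boundary terms. In the $x_3$ direction this is clean: helical fields are $2\pi$-periodic in $z$, so the contributions from the two faces $x_3 = \pi$ and $x_3 = -\pi$ cancel exactly by periodicity. In the horizontal directions $(x_1,x_2)$ there is no compact support a priori, so the correct argument is the density reduction above: one proves the equality for the dense class where the boundary integrals manifestly vanish, and then passes to the limit using that both sides are continuous with respect to the $H^1_{per}(\mathcal{D})$ norm. An alternative, which avoids density arguments, is to observe that $\dv$ and $\curl$ are constant-coefficient first-order operators, pass to Fourier variables (a Fourier transform in $(x_1,x_2)$ and a Fourier series in $x_3$), and verify the identity $\sum_{i,j}|\widehat{\partial_j v_i}|^2 = |\widehat{\dv\mathbf{v}}|^2 + |\widehat{\curl\mathbf{v}}|^2$ mode by mode via the algebraic identity $|\boldsymbol{\zeta}|^2|\hat{\mathbf{v}}|^2 = |\boldsymbol{\zeta}\cdot\hat{\mathbf{v}}|^2 + |\boldsymbol{\zeta}\times\hat{\mathbf{v}}|^2$ for the frequency vector $\boldsymbol{\zeta}$; Parseval/Plancherel then gives the $L^2$ equality directly. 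I would present the integration-by-parts version as the primary argument, noting that the helical periodicity is exactly what kills the $x_3$-boundary terms, and remark that one never actually needs helicity beyond ensuring the field lives naturally on $\mathcal{D}$ with periodic boundary conditions.
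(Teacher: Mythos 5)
Your proposal is correct and takes essentially the same route as the paper: after the same reduction to smooth fields, compactly supported in $(x,y)$ and periodic in $z$, the paper dots the identity $\Delta \mathbf{v}=\nabla (\dv \mathbf{v})-\nabla \times (\curl \mathbf{v})$ with $-\mathbf{v}$ and integrates over $\mathcal{D}$, arriving at the same sharper equality $\|\nabla \mathbf{v}\|_{L^2(\mathcal{D})}^2=\|\dv \mathbf{v}\|_{L^2(\mathcal{D})}^2+\|\curl \mathbf{v}\|_{L^2(\mathcal{D})}^2$ that you obtain from the pointwise divergence-form identity, with the boundary terms killed exactly as you describe (periodicity in $x_3$, compact support/density in the horizontal variables). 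One minor slip: your component form should be $\sum_{i,j}\partial_j\bigl(v_i\partial_i v_j - v_j\partial_i v_i\bigr)$, since $v_i\partial_j v_i$ is not the component expression for $(\dv\mathbf{v})\mathbf{v}$; the discrepancy $\tfrac12\Delta|\mathbf{v}|^2-\dv[(\dv\mathbf{v})\mathbf{v}]$ is itself a total divergence, so your integrated conclusion is unaffected.
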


\begin{proof}
Without loss of generality we may assume that $\mathbf{v}$ is a smooth vector field, compactly supported with respect to $x$ and $y$ , periodic with respect to $z$. The following is a well-known calculus identity:
\begin{equation} \label{calcidentity}
\Delta \mathbf{v}=\nabla (\dv \mathbf{v})-\nabla \times (\curl \mathbf{v}).
\end{equation}

Take the inner product of \eqref{calcidentity} with $-\mathbf{v}$ and integrate in $\mathcal{D}$ to obtain
\[\int_{\mathcal{D}} |\nabla \mathbf{v}|^2 \, dx = \int_{\mathcal{D}} (\dv \mathbf{v})^2 \, dx + \int_{\mathcal{D}} |\curl \mathbf{v}|^2 \, dx.\]

This clearly yields the desired estimate.

\end{proof}

Our objective, in this work, is to show that, under certain assumptions, the vanishing viscosity limit of viscous, helical flows is a helical weak solution of the Euler equations \eqref{a12}; below we give a precise definition of such a weak solution.

\begin{defn} \label{weakEuler}
Fix $T>0$. Let $\bu_0 \in H^1_{per,loc}(\mathcal{D})$. We say $\bu \in C^0(0,T;L^2(\mathcal{D})) \cap L^\infty(0,T; H^1_{per,loc}(\mathcal{D}))$ is a {\em helical weak solution} of the incompressible Euler equations \eqref{a12} with initial velocity $\bu_0$ if the following hold true:
\begin{enumerate}
\item At each time $0\leq t < T$, $\bu(\cdot,t)$ is a helical vector field;
\item For every test vector field $\Phi \in C^\infty_c([0,T)\times\overline{\mathcal{D}})$, periodic in $z$ with period $2\pi$, divergence free, the following identity is valid:
    \[\int_0^T\int_{\mathcal{D}} \Phi_t \cdot \bu + [(\bu \cdot \nabla)\Phi] \cdot \bu \, d\mbx dt + \int_{\mathcal{D}}\Phi_0\cdot \bu_0 \, d\mbx = 0;\]
\item At each time $0\leq t < T$, $\dv \bu(\cdot, t) = $ in the sense of distributions.
\end{enumerate}
\end{defn}
As is usual, it is possible to recover the scalar pressure by means of the Hodge decomposition.

\begin{rem} \label{notwild}
The requirement in Definition \ref{weakEuler} that $\bu \in  L^\infty(0,T; H^1_{per,loc}(\mathcal{D}))$ is not needed to make sense of the terms in the weak formulation. We note, however, that a weak solution as in Definition \ref{weakEuler} satisfies, additionally, a weak form of the inviscid vorticity equation. Definition \ref{weakEuler} excludes, hence, all known examples of {\em wild solutions}.
\end{rem}

We will conclude this section with the statement of our main result.

\begin{thm}\label{thm}
	Let $\{\mathbf{u}_0^{\nu}\}_{\nu>0} \subset H^1_{per}(\mathcal{D})$  be divergence free, helical vector fields and let $\eta^{\nu}_0= \mathbf{u}^{\nu}_0\cdot\boldsymbol{\xi}$ denote their respective helical swirls.
	
	Let $\mathbf{u}_0 \in H^1_{per}(\mathcal{D})$ be a divergence free, helical vector field, such that $\mathbf{u}_0$ has {\it vanishing helical swirl}, i.e., $\mathbf{u}_0 \cdot \boldsymbol{\xi} = 0$.

	Assume that:
	\begin{enumerate}
		\item \[\|\mathbf{u}_0^{\nu} - \mathbf{u}_0\|_{H^1} \to 0 \mbox{\ as\ } \nu \to 0;\]
		\item there exists a constant $C>0$ such that \[\|\eta^{\nu}_0  \|_{L^2(\mathcal{D})} \leq C\nu.\]
	\end{enumerate}
	
	Fix $T>0$. Let $\mathbf{u}^{\nu} \in L^\infty(0,T;H^1_{per}(\mathcal{D}))$ denote the strong solution of the incompressible Navier-Stokes equations \eqref{al} with initial velocity $\mathbf{u}_0^{\nu}$.
	Then, there exists  $\mathbf{u}^0 \in C^0(0,T;L^2(\mathcal{D})) \cap L^\infty(0,T;H^1_{per,loc}(\mathcal{D}))$ such that, passing to subsequences as needed, we have
	\begin{align}\label{b13}
	\mathbf{u}^{\nu} \to \mathbf{u}^0 \mbox{ strongly in  } L^2(0,T; L^2_{loc} (\mathcal{D})),
	\end{align}
	and  $\mathbf{u}^0$ is  a helical weak solution of the incompressible Euler equations, with initial velocity $\bu_0$, and with vanishing helical swirl at any time $0\leq t<T$.
\end{thm}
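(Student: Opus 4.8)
The plan is to establish the vanishing viscosity limit via a compactness argument built on $\nu$-uniform \emph{a priori} estimates, with the helical decomposition \eqref{dec} as the central tool for taming the vortex stretching terms. First I would record the basic energy estimate: testing \eqref{al} against $\mathbf{u}^\nu$ gives $\|\mathbf{u}^\nu(t)\|_{L^2(\mathcal{D})}^2 + 2\nu\int_0^t\|\nabla\mathbf{u}^\nu\|_{L^2}^2 \le \|\mathbf{u}_0^\nu\|_{L^2}^2$, which is uniformly bounded by assumption (1). The harder estimate is on the vorticity, and this is where the decomposition enters. Rather than work directly with \eqref{voreqs} — whose stretching terms $\partial_x\eta^\nu\partial_y u_3^\nu - \partial_y\eta^\nu\partial_x u_3^\nu$ cannot be controlled uniformly — I would split $\mathbf{u}^\nu = \mathbf{V}^\nu + \eta^\nu\boldsymbol{\xi}/|\boldsymbol{\xi}|^2$. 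By Lemma \ref{lem2.1} the field $\mathbf{V}^\nu$ is helical, divergence free, and has vanishing helical swirl, so by Remark \ref{Omega3} its full vorticity is carried by the single scalar $\Omega_3^\nu = \partial_x V_2^\nu - \partial_y V_1^\nu$.

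The strategy then is to control $\eta^\nu$ and $\Omega_3^\nu$ separately. For the swirl, I expect $\eta^\nu$ to satisfy an advection-diffusion equation of the form $\partial_t\eta^\nu + \mathbf{u}^\nu\cdot\nabla\eta^\nu = \nu\Delta\eta^\nu + (\text{lower-order terms from the }|\boldsymbol{\xi}|^{-2}\text{ weight})$; the maximum principle or an $L^2$ energy estimate, combined with the initial bound $\|\eta_0^\nu\|_{L^2}\le C\nu$ from assumption (2), should yield $\|\eta^\nu(t)\|_{L^2(\mathcal{D})} \le C\nu$ for all $t$, with the viscous production of swirl also being $\mathcal{O}(\nu)$. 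This is the mechanism by which the smallness of the initial swirl propagates and ultimately forces the limit to have \emph{vanishing} helical swirl. For $\Omega_3^\nu$ one derives, from \eqref{al} and the decomposition, a transport-diffusion equation whose source terms are now all products involving $\eta^\nu$ (hence $\mathcal{O}(\nu)$) and gradients that can be absorbed using the helical Ladyzhenskaya inequality \eqref{Lady}; integrating and using Lemma \ref{lem2.3} to convert curl and divergence bounds back into a full gradient bound, I would obtain $\|\nabla\mathbf{u}^\nu\|_{L^\infty(0,T;L^2_{loc})}$ or at least $\|\boldsymbol{\omega}^\nu\|_{L^\infty(0,T;L^2)}$ uniform in $\nu$.

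With these bounds in hand, I would extract convergence. The uniform $L^\infty(0,T;H^1_{loc})$ bound gives weak-$*$ compactness, yielding a limit $\mathbf{u}^0 \in L^\infty(0,T;H^1_{per,loc}(\mathcal{D}))$. To upgrade to the strong $L^2(0,T;L^2_{loc})$ convergence in \eqref{b13} I would use the Aubin--Lions lemma: the spatial bound on $\nabla\mathbf{u}^\nu$ together with a uniform bound on $\partial_t\mathbf{u}^\nu$ in some negative Sobolev space (read off from the equation \eqref{al}, using that the nonlinearity is controlled via \eqref{Lady} and the pressure is handled by the divergence-free structure) gives compactness in $L^2(0,T;L^2_{loc})$. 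Passing to the limit in the weak formulation is then routine: the linear terms converge by weak convergence, the viscous term $\nu\Delta\mathbf{u}^\nu$ vanishes since $\nu\to 0$ with $\|\nabla\mathbf{u}^\nu\|_{L^2}$ bounded, and the quadratic term $[(\mathbf{u}^\nu\cdot\nabla)\Phi]\cdot\mathbf{u}^\nu$ passes to the limit because strong $L^2_{loc}$ convergence of $\mathbf{u}^\nu$ lets one take the limit in the product. Finally, helicality and the divergence-free condition are preserved under these limits, and the swirl bound $\|\eta^\nu\|_{L^2}\le C\nu \to 0$ forces $\mathbf{u}^0\cdot\boldsymbol{\xi}=0$, giving vanishing helical swirl.

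The main obstacle I anticipate is the $\nu$-uniform control of the vorticity $\Omega_3^\nu$, specifically handling the singularity of the weight $1/|\boldsymbol{\xi}|^2$ (note $|\boldsymbol{\xi}|^2 = x^2+y^2+1 \ge 1$, so in fact the weight is bounded, which is a fortunate feature of the $\kappa=1$ helical geometry and should make the swirl terms genuinely lower-order) and, more seriously, closing the estimate for $\Omega_3^\nu$ against the stretching-type contributions. The key point is that after the decomposition every dangerous term acquires a factor of $\eta^\nu$ or its derivatives, which are $\mathcal{O}(\nu)$, so one must verify that the $\mathcal{O}(\nu)$ smallness exactly compensates the potentially $\mathcal{O}(1/\nu)$ growth that uncontrolled stretching would produce; this balance, carried out via a Gronwall argument using \eqref{Lady} to estimate the nonlinear products, is the technical heart of the proof and is presumably the content of Section 4.
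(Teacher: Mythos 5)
Your overall architecture is the same as the paper's --- the decomposition \eqref{dec} into $\mathbf{U}^\nu$ plus the swirl part, separate estimates for $\eta^\nu$ and $\Omega_3^\nu$, the helical Ladyzhenskaya inequality \eqref{Lady} and the curl-div estimate of Lemma \ref{lem2.3}, Aubin--Lions compactness, and passage to the limit in the weak formulation. But your decisive step is wrong as stated, and the error is exactly where the paper has to work hardest. You claim that a maximum principle or a standalone $L^2$ energy estimate, plus $\|\eta_0^\nu\|_{L^2}\le C\nu$, yields $\|\eta^\nu(t)\|_{L^2}\le C\nu$, ``with the viscous production of swirl also being $\mathcal{O}(\nu)$.'' The swirl equation \eqref{bb2} has production term $2\nu(\partial_x u_2^\nu-\partial_y u_1^\nu)$, i.e.\ $\nu$ times the full third component of the vorticity, and the only unconditional a priori control on it is the energy inequality, which gives $\|\nabla\bu^\nu\|_{L^2(0,T;L^2)}=\mathcal{O}(\nu^{-1/2})$; feeding this into the energy estimate for $\eta^\nu$ produces only $\|\eta^\nu\|_{L^\infty(0,T;L^2)}=\mathcal{O}(\sqrt{\nu})$ --- this is precisely the paper's preliminary Lemma \ref{lem1}, whose right-hand side is $c(\|\eta_0^\nu\|_{L^2}+\sqrt{\nu}\|\bu_0^\nu\|_{L^2})$. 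To upgrade to $\mathcal{O}(\nu)$ you need $\omega_3^\nu$ (equivalently $\Omega_3^\nu$) bounded in $L^2$ uniformly in $\nu$ --- which is exactly the vorticity bound your plan proves \emph{afterwards, using} the $\mathcal{O}(\nu)$ swirl bound. Your sequential scheme ($\eta^\nu$ first, then $\Omega_3^\nu$) is therefore circular.

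The paper breaks this circularity with a coupled bootstrap (Lemma \ref{lem2}). Using identity \eqref{4.2} to replace $\partial_x u_2^\nu-\partial_y u_1^\nu$ by $\Omega_3^\nu$ plus derivative terms in $\eta^\nu$, and setting $Y(t)=\int_0^t\|\Omega_3^\nu\|_{L^2}^2$, one first proves the \emph{conditional} estimate $\|\eta^\nu\|_{L^\infty(0,t;L^2)}+\sqrt{\nu}\,\|\nabla\eta^\nu\|_{L^2(0,t;L^2)}\le C\nu(1+Y(t))^{1/2}$; substituting this into the $\Omega_3^\nu$ energy estimate --- whose sources all carry factors of $\eta^\nu$, $\nabla\eta^\nu$ or $\nu$ --- yields the second-order differential inequality $Y''\le C(1+\|\nabla\eta^\nu\|_{L^2}^2\,Y)+CY'+C\|\nabla\eta^\nu\|_{L^2}^2$, which closes by Gronwall because the crude Lemma \ref{lem1} already supplies the $\nu$-uniform bound $\|\nabla\eta^\nu\|_{L^2(0,T;L^2)}\le c$. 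Only then do $\|\Omega_3^\nu\|_{L^\infty(0,T;L^2)}\le c$ and the sharp swirl bound \eqref{RevHeliSwi} emerge simultaneously. A secondary inaccuracy in your compactness step: neither $\|\boldsymbol{\omega}^\nu\|_{L^\infty(0,T;L^2)}$ nor a uniform $L^\infty(0,T;H^1_{loc})$ bound for $\bu^\nu$ is available, since the swirl part $(\partial_y\eta^\nu,-\partial_x\eta^\nu,0)$ of the vorticity is controlled only in $L^2(0,T;L^2)$ (with norm $\mathcal{O}(\sqrt{\nu})$); the paper's compactness runs through a uniform $L^2(0,T;H^1_{per,loc})$ bound, and the $L^\infty(0,T;H^1_{per,loc})$ regularity is recovered only for the limit $\bu^0$, after $\eta^\nu\to 0$, via weak-$*$ convergence of $\Omega_3^\nu$.
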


\section{Global existence of Navier-Stokes equation with helical symmetry}
In this section we discuss well-posedness results for \eqref{al}. In particular, we prove the
global existence of weak helical solutions
 provided  the initial velocity belongs
to $L^2 (\mathcal{D})$ and is helically symmetric, and we prove global existence and uniqueness of strong solutions when the initial data, additionally, belongs to $H^1_{per}$. These results are not included in  \cite{MTL} because our fluid domain is unbounded.

First we introduce a basic mollifier, adapted to the helical symmetry. Let $\rho_1=\rho_1(|\mbx'|)\in C_c^{\infty}(\mathbb{R}^2)$ be a  radially symmetric function satisfying that $\rho_1\geq 0$ and $\int_{\mathbb{R}^2}\rho_1(\mbx')d\mbx'=1$, where $\mbx'=(x,y)$; let also $\rho_2=\rho_2(z)$ in $[-\pi,\pi]$  be a nonnegative, periodic, smooth function with  $\int_{-\pi}^{\pi}\rho_2(z)dz=1$.
Set $J_{\epsilon}\bv$ to be the mollification of a helical vector field $\mathbf{u}\in L^p(\mathcal{D}),
1\leq p\leq \infty,$ given by
\begin{align}\label{d2}
J_{\epsilon}\mathbf{u}=J_{\epsilon}\mathbf{u}(x)\equiv\int_{\mathcal{D}}\rho^{\epsilon}(\mbx-\mby)\mathbf{u}(\mby)d\mby,\
\epsilon>0,
\end{align}
where $\rho^{\epsilon}(\mbx)=\epsilon^{-3}\rho(\frac{\mbx}{\epsilon})$ and $\rho=\rho(\mbx)=\rho_1(\mbx')\rho_2(z)$.

The following lemma provides some basic properties of these mollifiers.
\begin{lem}\label{lem4}
Let $J_{\epsilon}$ be the mollifier defined in \eqref{d2}. Then, for each $\mathbf{u}\in L^p(\mathcal{D})$, $1 \leq  p \leq \infty$,
$J_{\epsilon}\mathbf{u}$ is a $C^{\infty}$ function and
\begin{align}
&(1)\quad J_{\epsilon}[\mathbf{u}(\mby+ \mbh)](\mbx)=J_{\epsilon}[\mathbf{u}(\mby)](\mbx+\mbh),\ \ \ \forall \mbh\in \mathcal{D},
\label{d3}\\
&(2)\quad J_{\epsilon}[\mathbf{u}(R_{\theta}\mby)](\mbx)=J_{\epsilon}[\mathbf{u}(y)](R_{\theta}\mbx),
\ \ \ \hbox{where \ } R_{\theta} \hbox{\ is \ defined \ in \eqref{a}},\label{d4}\\
&(3)\quad J_{\epsilon}[\mathbf{u}(S_{\theta}\mby)](\mbx)=J_{\epsilon}[\mathbf{u}(\mby)](S_{\theta}\mbx),
\ \ \ \hbox{ where \ } S_{\theta} \hbox{\ is \ defined \ in \eqref{a14}},\label{d5}\\
&(4)\quad D^{\alpha}_x(J_{\epsilon}[\mathbf{u}(\mby)])(\mbx)=J_{\epsilon}[D^{\alpha}_y\mathbf{u}(\mby)](\mbx),
\ \ \  |\alpha|\leq m,\ \mathbf{u}\in H^m.\label{d6}
\end{align}
\end{lem}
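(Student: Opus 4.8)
The plan is to establish each of the four properties by an elementary change of variables in the defining integral \eqref{d2}, the only genuine subtlety being the consistent treatment of the periodic $z$-direction on the fundamental domain $\mathcal{D}=\mathbb{R}^2\times[-\pi,\pi]$. Before addressing (1)--(4), I would record that $J_\epsilon\mathbf{u}$ is $C^\infty$: since the kernel $\rho^\epsilon$ is smooth and, for fixed $\mbx$, has compact support in the horizontal variables, one may differentiate under the integral sign arbitrarily often, which simultaneously yields smoothness and sets up the argument for (4). I would also note, once and for all, that because $\mathbf{u}$ is helical (hence $2\pi$-periodic in $z$) and $\rho_2$ is $2\pi$-periodic, the integrand $\rho^\epsilon(\mbx-\mby)\mathbf{u}(\mby)$ is $2\pi$-periodic in the third component of $\mby$; consequently the integral over $\mathcal{D}$ equals the integral over any $z$-translate of $\mathcal{D}$, which is precisely what legitimizes the $z$-shifts below.

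For (1), I would substitute $\mathbf{z}=\mby+\mbh$ in $J_\epsilon[\mathbf{u}(\cdot+\mbh)](\mbx)=\int_{\mathcal{D}}\rho^\epsilon(\mbx-\mby)\mathbf{u}(\mby+\mbh)\,d\mby$; the Jacobian is $1$, the horizontal variables range freely, and the shift in $z$ is absorbed using the periodicity remark, turning the kernel into $\rho^\epsilon((\mbx+\mbh)-\mathbf{z})$ and producing $J_\epsilon[\mathbf{u}](\mbx+\mbh)$. For (2), I would substitute $\mathbf{z}=R_\theta\mby$; since $R_\theta$ is orthogonal with unit determinant the Lebesgue measure is preserved, and the decisive point is that $\rho^\epsilon$ is invariant under $R_\theta$. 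This is exactly where the radial symmetry of $\rho_1$ enters, together with the fact that $R_\theta$ fixes the $z$-coordinate, so that $\rho^\epsilon(\mbx-R_{-\theta}\mathbf{z})=\rho^\epsilon(R_\theta\mbx-\mathbf{z})$, giving $J_\epsilon[\mathbf{u}](R_\theta\mbx)$.

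Property (3) I would deduce by composition rather than by a fresh computation: writing $S_\theta\mby=R_\theta\mby+(0,0,\theta)^T$, I would apply (2) to handle the rotation and then (1) to handle the translation by $(0,0,\theta)^T$, recovering $J_\epsilon[\mathbf{u}](S_\theta\mbx)$. For (4), I would differentiate under the integral sign, invoke the kernel identity $D_x^\alpha\rho^\epsilon(\mbx-\mby)=(-1)^{|\alpha|}D_y^\alpha\rho^\epsilon(\mbx-\mby)$, and integrate by parts in $\mby$; the boundary terms vanish because the kernel has compact horizontal support and the integrand is $z$-periodic, leaving $J_\epsilon[D_y^\alpha\mathbf{u}](\mbx)$ for $|\alpha|\le m$ and $\mathbf{u}\in H^m$.

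I expect the main obstacle to be purely bookkeeping: making the periodic change of variables in $z$ rigorous for (1), and hence for (3), i.e.\ confirming that shifting $z$ neither creates boundary contributions nor alters the domain of integration. This rests entirely on the periodicity observation recorded at the outset, after which everything reduces to the standard mollifier calculus.
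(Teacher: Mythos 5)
Your proposal is correct and follows essentially the same route as the paper, which merely asserts that (1) follows from the definition, (2) is a straightforward calculation, (3) follows by combining (1) and (2) --- exactly your composition $S_\theta\mby=R_\theta\mby+(0,0,\theta)^T$ --- and (4) is the standard mollifier identity cited to Majda--Bertozzi. You simply supply the change-of-variables and $z$-periodicity details that the paper leaves implicit, and these are carried out correctly.
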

\begin{proof}[Proof of Lemma \ref{lem4}]
The proof of \eqref{d3} is easily obtained from the definition of $J_\epsilon$,
 \eqref{d2}. Item \eqref{d5} follows directly from
\eqref{d3} and \eqref{d4}, while \eqref{d6} can be found in \cite{MB}.
Item \eqref{d4} follows by a straightforward calculation.
\end{proof}

Let us briefly recall that a weak solution of the Navier-Stokes equations has the regularity $L^\infty(0,T;L^2(\mathcal{D})) \cap L^2(0,T;H^1_{per}(\mathcal{D}))$, whereas a strong solution belongs to $L^\infty(0,T;H^1_{per}(\mathcal{D})) \cap L^2(0,T;H^2_{per}(\mathcal{D}))$.

We can now state and prove a basic result on existence of weak and strong helical solutions to the Navier-Stokes equations \eqref{al}.

\begin{thm}\label{thm2} Fix $\nu > 0$.
Let $\mathbf{u}_0^{\nu}\in L^2(\mathcal{D})$ be a divergence free
and   helical vector field. Fix, also, $T>0$.
\begin{itemize}
\item[(1)] There exists $\mathbf{u}^{\nu} \in L^{\infty}(0,T;L^2 (\mathcal{D})) \cap L^2(0,T;H^1_{per}(\mathcal{D}))$ which is a  helical weak
solution to the three-dimensional Navier-Stokes equations \eqref{al}. In addition, $\mathbf{u}^{\nu}$ satisfies the
following inequality
\begin{align}\label{cc}
\|\mathbf{u}^{\nu}(t)\|_{L^{\infty}(0,T;L^2(\mathcal{D}))}^2+\nu
\|\nabla \mathbf{u}^{\nu}\|_{L^2(0,T;L^2(\mathcal{D}))}^2\leq \|\mathbf{u}_0^{\nu}\|_{L^2(\mathcal{D})}.
\end{align}

\item[(2)] If, in addition, $\mathbf{u}_0^{\nu}\in H^1_{per}(\mathcal{D})$, then
the three-dimensional Navier-Stokes equations \eqref{al} has a unique and
global strong  solution $\mathbf{u}^{\nu} \in
L^\infty(0,T;H^1_{per}(\mathcal{D})) \cap L^2(0,T;H^2_{per}(\mathcal{D}))$ which is helically symmetric.
\end{itemize}
\end{thm}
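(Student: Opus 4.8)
The plan is to build approximate solutions by a symmetry-preserving regularization, extract energy bounds to get the weak solution of part (1), and then upgrade these estimates to the $H^1$ level to obtain the strong solution of part (2); helicity is what makes the higher-order estimate close globally in time.

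For part (1), I would regularize the advection term in \eqref{al}, replacing $\mathbf{u}^{\nu}\cdot\nabla\mathbf{u}^{\nu}$ by $(J_{\epsilon}\mathbf{u}^{\nu})\cdot\nabla\mathbf{u}^{\nu}$ with $J_{\epsilon}$ the helical mollifier of \eqref{d2}. For each fixed $\epsilon>0$ this is an essentially linear parabolic system and has a unique global solution $\mathbf{u}^{\nu,\epsilon}$; by Lemma \ref{lem4}, items \eqref{d5} and \eqref{d6}, the mollification commutes with the helical symmetry, so each $\mathbf{u}^{\nu,\epsilon}$ is again helical and divergence free. Testing the regularized equation against $\mathbf{u}^{\nu,\epsilon}$, the regularized advection and pressure terms drop out by incompressibility, giving the energy identity and hence the bound \eqref{cc}, uniformly in $\epsilon$. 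This yields uniform control in $L^{\infty}(0,T;L^2(\mathcal{D}))\cap L^2(0,T;H^1_{per}(\mathcal{D}))$; reading $\partial_t\mathbf{u}^{\nu,\epsilon}$ off the equation bounds it in a negative-order space, so a \emph{local} Aubin--Lions argument (the global version fails on the unbounded $\mathcal{D}$) gives strong convergence in $L^2(0,T;L^2_{loc}(\mathcal{D}))$ along a subsequence. This is exactly the convergence needed to pass to the limit in the nonlinear term against compactly supported, divergence-free test fields; since helicity is a closed condition under $L^2$ convergence, the limit $\mathbf{u}^{\nu}$ is a helical weak solution satisfying \eqref{cc}.

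For part (2), local existence and uniqueness of an $H^1$ strong solution is standard (a Galerkin scheme or a fixed point in $L^{\infty}(0,\tau;H^1)\cap L^2(0,\tau;H^2)$), so the real issue is a global a priori $H^1$ bound ruling out blow-up at fixed $\nu$. Testing \eqref{al} against $-\Delta\mathbf{u}^{\nu}$ gives
\begin{equation*}
\frac12\frac{d}{dt}\|\nabla\mathbf{u}^{\nu}\|_{L^2}^2 + \nu\|\Delta\mathbf{u}^{\nu}\|_{L^2}^2 = \int_{\mathcal{D}}(\mathbf{u}^{\nu}\cdot\nabla\mathbf{u}^{\nu})\cdot\Delta\mathbf{u}^{\nu}\,d\mbx,
\end{equation*}
and the task is to absorb the right-hand side into $\nu\|\Delta\mathbf{u}^{\nu}\|_{L^2}^2$ with an integrable-in-time remainder. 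Here the helical structure is decisive: the nonlinear term is bounded by $\|\mathbf{u}^{\nu}\|_{L^4}\|\nabla\mathbf{u}^{\nu}\|_{L^4}\|\Delta\mathbf{u}^{\nu}\|_{L^2}$, and both $L^4$ factors scale two-dimensionally rather than three-dimensionally for helical fields, the first directly through Lemma \ref{lem2.2}. After a Young inequality this produces
\begin{equation*}
\frac{d}{dt}\|\nabla\mathbf{u}^{\nu}\|_{L^2}^2 + \nu\|\Delta\mathbf{u}^{\nu}\|_{L^2}^2 \le C_{\nu}\big(\|\mathbf{u}^{\nu}\|_{L^2}^2\|\nabla\mathbf{u}^{\nu}\|_{L^2}^2\big)\,\|\nabla\mathbf{u}^{\nu}\|_{L^2}^2,
\end{equation*}
whose coefficient is integrable in time thanks to \eqref{cc}; Gronwall then gives a global $H^1$ bound and, fed back, an $L^2(0,T;H^2_{per})$ bound. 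Uniqueness follows by testing the equation for the difference of two solutions against the difference itself and closing with Lemma \ref{lem2.2} and Gronwall.

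The main obstacle is the $L^4$ control of $\nabla\mathbf{u}^{\nu}$: unlike $\mathbf{u}^{\nu}$, the components of $\nabla\mathbf{u}^{\nu}$ are not helical (one checks $\partial_{\boldsymbol{\xi}}(\partial_x u_1)=\partial_x u_2+\partial_y u_1\neq 0$ using \eqref{a18}), so Lemma \ref{lem2.2} does not apply to them verbatim. I would resolve this by passing to the two-dimensional trace $\mathbf{w}$ of Lemma \ref{twodimpro}: in the reduced variables the problem becomes a genuinely planar Navier--Stokes-type system with lower-order geometric terms, the flat two-dimensional Gagliardo--Nirenberg inequality applies to $\mathbf{w}$ and $\nabla_y\mathbf{w}$ with no symmetry restriction, and the norm equivalences established in the proof of Lemma \ref{lem2.2} transfer the resulting bounds back to $\mathbf{u}^{\nu}$. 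Throughout, Lemma \ref{lem2.3} allows one to trade $\|\nabla\mathbf{u}^{\nu}\|_{L^2}$ for $\|\curl\mathbf{u}^{\nu}\|_{L^2}$, which is convenient if one prefers to run the higher estimate at the level of the vorticity.
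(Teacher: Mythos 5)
Your plan for part (1) is essentially the paper's: a symmetry-preserving mollification (the paper regularizes both the advection and the viscous term, \`a la Majda--Bertozzi, so that \eqref{d7} is an ODE in a Banach space solvable by Picard, while your variant keeps $\nu\Delta\mathbf{u}$ and is a linear parabolic problem --- either works), propagation of helical symmetry via Lemma \ref{lem4} and uniqueness, the uniform energy bound, and a local Aubin--Lions argument; this part is sound.

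In part (2), however, there is a genuine gap in your treatment of $\|\nabla\mathbf{u}^{\nu}\|_{L^4}$, precisely at the point you flagged. The trace correspondence of Lemma \ref{twodimpro} and the norm equivalences from the proof of Lemma \ref{lem2.2} (cf.\ \eqref{cpm}) control only the \emph{horizontal} derivatives $\nabla_H\mathbf{u}^{\nu}$: from \eqref{Yofx} one has $\partial_{x_3}\mby=(-y_2,y_1)$, so the vertical derivative corresponds in trace variables to terms of the form $R'_{x_3}\bw + R_{x_3}\bigl((-y_2\partial_{y_1}+y_1\partial_{y_2})\bw\bigr)$, with coefficients growing linearly at infinity; since the slice is the whole plane, no unweighted bound on $\bw$, $\nabla_y\bw$ controls these. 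Equivalently, by Lemma \ref{clm2}, $\partial_z\mathbf{u}^{\nu} = -y\,\partial_x\mathbf{u}^{\nu}+x\,\partial_y\mathbf{u}^{\nu}+(u_2^{\nu},-u_1^{\nu},0)^T$, again with unbounded coefficients. So your transfer argument never yields the bound on $\|\partial_{x_3}\mathbf{u}^{\nu}\|_{L^4}$ that the nonlinear estimate requires. Relatedly, the claim that the reduced system for $\bw$ is ``genuinely planar Navier--Stokes with lower-order geometric terms'' is inaccurate: $\partial_z^2$ produces \emph{second-order} terms with quadratically growing coefficients in the $y$-variables, so running the whole $H^1$ estimate at the level of $\bw$ is not routine either. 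The paper closes this gap with a short but essential observation: since $\partial_{x_3}$ commutes with $\partial_{\boldsymbol{\xi}}$, the field $\partial_{x_3}\mathbf{u}^{\nu}$ itself satisfies \eqref{a18}--\eqref{a20}, i.e.\ it is helical, and hence Lemma \ref{lem2.2} applies to it \emph{directly}, giving \eqref{ux3nulady}; combined with the horizontal estimate \eqref{nablaHunu} (which coincides with yours) and elliptic regularity, this yields
\begin{equation*}
\|\nabla\mathbf{u}^{\nu}\|_{L^4(\mathcal{D})}\le C\,\|\nabla\mathbf{u}^{\nu}\|_{L^2(\mathcal{D})}^{\frac12}\,\|\Delta\mathbf{u}^{\nu}\|_{L^2(\mathcal{D})}^{\frac12},
\end{equation*}
after which your Young--Gronwall closure, with coefficient integrable in time by \eqref{cc}, proceeds exactly as in the paper. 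With that one observation inserted, your argument is complete.
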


\begin{proof}[Proof of Theorem \ref{thm2}]
We will begin by establishing (1); the proof will be divided into four steps. As much of this proof is standard, we will be brief.

{\bf Step I} As in \cite{MB}, we construct approximate solutions $\mathbf{u}^{\nu,\epsilon}$
to the  Navier-Stokes equations by solving
\begin{equation} \label{d7}
\left\{
\begin{array}{l}
\mathbf{u}^{\nu,\epsilon}_t+J_{\epsilon}
[(J_{\epsilon}\mathbf{u}^{\nu,\epsilon} \cdot \nabla)(J_{\epsilon}\mathbf{u}^{\nu,\epsilon})]+\nabla p^{\nu,\epsilon}
=\nu J_{\epsilon}(J_{\epsilon}\Delta \mathbf{u}^{\nu,\epsilon}),\\
\dv \mathbf{u}^{\nu,\epsilon}=0,\\
\mathbf{u}^{\nu,\epsilon}(t=0,\mbx)=\mathbf{u}^{\nu,\epsilon}_0,
\end{array}
\right.
\end{equation}
where $\mathbf{u}^{\nu,\epsilon}_0(x):=J_{\epsilon}\mathbf{u}_0^{\nu}(x),$
with $J_{\epsilon}$   defined in \eqref{d2}. By the Picard theorem
(see e.g. \cite{MB}), there exists a unique, global, smooth solution
$\mathbf{u}^{\nu,\epsilon}$ for the regularized Navier-Stokes equations
\eqref{d7}.

{\bf Step II} Next, we show that the approximate solutions
$\mathbf{u}^{\nu,\epsilon}$ preserve helical symmetry.

First we note that $\mathbf{u}_0^{\nu,\epsilon}$ is helical; we use \eqref{d5} in Lemma \ref{lem4} together with the fact that $\mathbf{u}_0^{\nu}$ is
a helical vector field.
We have:
\begin{align}\label{d8}
R_{\theta}^{-1} \mathbf{u}_0^{\nu,\epsilon}(S_{\theta}\mbx)
 &:=R_{\theta}^{-1}J_{\epsilon}\mathbf{u}^{\nu}_0(S_{\theta}\mbx)
=R_{\theta}^{-1}J_{\epsilon}[\mathbf {u}^{\nu}_0(S_{\theta}\mby)](\mbx)\nonumber\\
&:=R_{\theta}^{-1}J_{\epsilon}[R_{\theta}\mathbf{u}^{\nu}_0(\mby)](\mbx)
=J_{\epsilon}\mathbf{u}^{\nu}_0(\mbx):=\mathbf{u}_0^{\nu,\epsilon}(\mbx),
\end{align}

Let $\bar{\mathbf{u}}(\mbx,t)=R^{-1}_{\theta}\mathbf{u}^{\nu,\epsilon}(S_{\theta}\mbx,t)$ and $\bar{p}^{\nu,\epsilon}(\mbx,t)=p^{\nu,\epsilon}(S_{\theta}\mbx,t)$. Direct calculations give that the pair $(\bar{\mathbf{u}},\,\bar{p}^{\nu,\epsilon})$  is a solution of \eqref{d7} with initial data
$\bar{\mathbf{u}}(\mbx,0)=R^{-1}_{\theta}\mathbf{u}_0^{\nu,\epsilon}(S_{\theta}\mbx)\equiv \mathbf{u}_0^{\nu,\epsilon}(\mbx)$.
Hence, by uniqueness of smooth solutions $\mathbf{u}^{\nu, \epsilon}$
of \eqref{d7}, we obtain that
\begin{align}\label{d11}
\mathbf{u}^{\nu,\epsilon}(\mbx,t)\equiv R_{\theta}^{-1} \mathbf{u}^{\nu,\epsilon}(S_{\theta}\mbx,t)
\end{align}
i.e., $\mathbf{u}^{\nu,\epsilon}$ is a helical vector field.

{\bf Step III} In this step we discuss uniform, in $\epsilon$, estimates.
Take the $L^2$-inner production of the regularized momentum equations \eqref{d7} with $\mathbf{u}^{\nu,\epsilon}$ to obtain
\begin{align} \label{diffinequnuep}
\frac{1}{2}\frac{d}{dt}\|\mathbf{u}^{\nu,\epsilon}\|_{L^2(\mathcal{D})}^2+\nu \|\nabla J_{\epsilon}\mathbf{u}^{\nu,\epsilon}
\|_{L^2(0,T;L^2(\mathcal{D}))}^2 \leq 0.
\end{align}

Integrate \eqref{diffinequnuep} in time, from $0$ to $T$, to find
\begin{align} \label{unuepest}
\|\mathbf{u}^{\nu,\epsilon}\|_{L^{\infty}(0,T;L^2(\mathcal{D}))}^2
+\nu \|\nabla J_\epsilon \mathbf{u}^{\nu,\epsilon}\|_{L^2(0,T;L^2(\mathcal{D}))}^2
\leq \|\mathbf{u}_0^{\nu,\epsilon}\|_{L^2(\mathcal{D})}^2.
\end{align}

In view of \eqref{unuepest} it is standard that $\{J_\epsilon\mathbf{u}^{\nu,\epsilon}\}_{\epsilon>0}$ is a compact subset of $L^2(0,T;L^2(\mathcal{D}))$ and hence, passing to subsequences as needed and using properties of mollifiers, we find that $\mathbf{u}^{\nu,\epsilon}$ is a convergent sequence in $L^2(0,T;L^2(\mathcal{D}))$, as $\epsilon \to 0$. We easily obtain that the limit $\mathbf{u}^{\nu}$
satisfies \eqref{al} in the sense of distributions. From the uniform bound in $L^\infty(0,T;L^2(\mathcal{D}))$, \eqref{unuepest}, we obtain that $\mathbf{u}^\nu \in L^\infty(0,T;L^2(\mathcal{D}))$; similarly, we find that $\mathbf{u}^\nu \in L^2(0,T;H^1_{per}(\mathcal{D}))$. Since, from Step II, we deduced that $\mathbf{u}^{\nu,\epsilon}$ is a helical vector field, it follows easily that the limit $\mathbf{u}^{\nu}$ is also helically symmetric. Therefore,
there exists a helical weak solution $\mathbf{u}^{\nu}\in
L^{\infty}(0,T;L^2)\cap L^2(0,T;H^1)$ of \eqref{al}. The energy inequality \eqref{cc} follows by weak convergence in $L^2(0,T;H^1_{per}(\mathcal{D}))$.

{\bf Step IV} Finally, we establish item (2), the existence and uniqueness of a strong solution if the initial data is smoother. From above, we have a weak helical solution in $L^{\infty}(0,T;L^2)\cap L^2(0,T;H^1)$ to the system (\ref{al}). We will show, by energy estimates, that the regularity of $\mathbf{u}^{\nu}$ can be improved to  $L^{\infty}(0,T;H^1)\cap L^2(0,T;H^2)$. Although the estimates below are formal, they can be made rigorous using the regularized equation \eqref{d7} in a
similar way to what was done in Step III.

Taking the $L^2$-inner product of \eqref{al} with $\Delta \mathbf{u}^{\nu}$ we find
\begin{align} \label{enestnablaunu}
&\frac{1}{2}\frac{d}{dt}\|\nabla \mathbf{u}^{\nu}\|_{L^2(\mathcal{D})}^2
+\nu \|\Delta \mathbf{u}^{\nu}\|_{L^2(0,T;L^2(\mathcal{D}))}^2\nonumber\\
&\quad\leq |\int_{\mathcal{D}}
(\mathbf{u}^{\nu}\cdot \nabla) \mathbf{u}^{\nu}\cdot \Delta \mathbf{u}^{\nu}dx|\nonumber\\
&\quad \leq \|\mathbf{u}^{\nu}\|_{L^4}\|\nabla \mathbf{u}^{\nu}\|_{L^4}
\|\Delta \mathbf{u}^{\nu}\|_{L^2}.
\end{align}
Now, since $\bu^\nu$ is a helical vector field, it follows from Lemma \ref{lem2.2} that
\begin{equation}\label{unulady}
\|\bu^\nu\|_{L^4(\mathcal{D})} \leq \|\bu^\nu\|_{L^2(\mathcal{D})}^{\frac 1 2}\|\nabla \bu^\nu\|_{L^2(\mathcal{D})}^{\frac 1 2}.
\end{equation}
Let us examine $\nabla \bu^\nu$. Recall that, from Lemma \ref{lem2.2}, there exists a unique vector field $\bw=\bw(y_1,y_2)$ such that the relation in \eqref{uandpinY} holds true, with $\mby=\mby(\mbx)$ as in \eqref{Yofx}. We write $\nabla \bu^\nu = (\nabla_H \bu^\nu,\partial_{x_3}\bu^\nu)$, where $\nabla_H$ refers to the {\it horizontal} derivatives, i.e. derivatives with respect to $x_1$, $x_2$. In view of \eqref{uandpinY}, \eqref{Yofx} an easy calculation yields, for each $1\leq p \leq \infty$, $m \in \mathbb{N}$, the existence of constants $C_{p,m}$, $c_{p,m} >0$ such that
\begin{equation} \label{cpm}
c_{p,m}\|\nabla^m_y \bw \|_{L^p(\real^2)} \leq \|\nabla^m_H \bu^\nu \|_{L^p(\mathcal{D})} \leq C_{p,m} \|\nabla^m_y \bw \|_{L^p(\real^2)}.
\end{equation}
Since $\nabla\bw$ is a function of two independent variables we may use the two dimensional Ladyzhenskaya inequality for $\nabla \bw$ to find
\[\|\nabla_y\bw\|_{L^4(\real^2)} \leq C\|\nabla_y\bw\|_{L^2(\real^2)}^{\frac 1 2}\|\nabla_y^2 \bw\|_{L^2(\real^2)}^{\frac 1 2} ,\]
from which, together with \eqref{cpm}, it follows that
\begin{equation}\label{nablaHunu}
\|\nabla_H\bu^\nu\|_{L^4(\mathcal{D})} \leq C\|\nabla_H\bu^\nu\|_{L^2(\mathcal{D})}^{\frac 1 2}\|\nabla^2_H \bu^\nu\|_{L^2(\mathcal{D})}^{\frac 1 2}.
\end{equation}
Next, we consider $\partial_{x_3}\bu^\nu$. Recall the criteria in Lemma \ref{clm2} for a vector field to be helical: $\partial_{\boldsymbol{\xi}}v_1 = v_2$,
$\partial_{\boldsymbol{\xi}}v_2 = -v_1$, $\partial_{\boldsymbol{\xi}}v_3 = 0$. Note that $\partial_{x_3}\partial_{\boldsymbol{\xi}} = \partial_{\boldsymbol{\xi}} \partial_{x_3}$. Therefore, since $\bu^\nu$ is a helical vector field, we deduce that
\[\partial_{\boldsymbol{\xi}}\partial_{x_3}u_1^\nu= \partial_{x_3}u_2^\nu; \hspace{.5cm}
\partial_{\boldsymbol{\xi}}\partial_{x_3}u_2^\nu= - \partial_{x_3}u_1^\nu; \hspace{.5cm}
\partial_{\boldsymbol{\xi}}\partial_{x_3}u_3^\nu= 0.
\]
Hence, $\partial_{x_3}\bu^\nu$ is a helical vector field and, therefore, in view of Lemma \ref{lem2.2},
\begin{equation}\label{ux3nulady}
\|\partial_{x_3}\bu^\nu\|_{L^4(\mathcal{D})} \leq C\|\partial_{x_3}\bu^\nu\|_{L^2(\mathcal{D})}^{\frac 1 2}\|\nabla \partial_{x_3}\bu^\nu\|_{L^2(\mathcal{D})}^{\frac 1 2}.
\end{equation}

Notice that both the right-hand-side of \eqref{nablaHunu} and of \eqref{ux3nulady} are bounded by $C\|\nabla\bu^\nu\|_{L^2(\mathcal{D})}^{1/2}\| \Delta\bu^\nu\|_{L^2(\mathcal{D})}^{1/2}$, since, from elliptic regularity theory, we know that all second derivatives are bounded, in $L^2$, by the Laplacian.

We obtain, from \eqref{nablaHunu} and \eqref{ux3nulady},
\begin{equation}\label{nablaunu}
\|\nabla \bu^\nu\|_{L^4(\mathcal{D})} \leq C\|\nabla \bu^\nu\|_{L^2(\mathcal{D})}^{\frac 1 2}\|\Delta \bu^\nu\|_{L^2(\mathcal{D})}^{\frac 1 2}.
\end{equation}

Substituting \eqref{unulady} and \eqref{nablaunu} into \eqref{enestnablaunu} yields
\begin{align}\label{enestnablaunufinal}
\frac{1}{2}\frac{d}{dt}\|\nabla \mathbf{u}^{\nu}\|_{L^2(\mathcal{D})}^2
+\nu \|\Delta \mathbf{u}^{\nu}\|_{L^2(0,T;L^2(\mathcal{D}))}^2 \nonumber \\
\quad \leq \|\mathbf{u}^{\nu}\|_{L^2}^{\frac 1 2}\|\nabla \mathbf{u}^{\nu}\|_{L^2}
\|\Delta \mathbf{u}^{\nu}\|_{L^2}^{\frac 3 2} \nonumber \\
\quad \leq \frac{\nu}{4}||\Delta\bu^\nu\|_{L^2}^2 + C\nu^{-3}\|\bu^\nu\|_{L^2}^2\|\nabla\bu^\nu\|_{L^2}^4,
\end{align}
where we used Young's inequality to obtain the last inequality.
From \eqref{cc} we have
\[\nu\int_0^T \|\nabla\bu^\nu\|_{L^2}^2 dt \leq \|\bu_0^\nu\|_{L^2}^2,\]
so that, by Gronwall's lemma, we obtain
\begin{align}
\|\nabla \mathbf{u}^{\nu}\|_{L^{\infty}(0,T;L^2(\mathcal{D}))}^2\leq \|\nabla \mathbf{u}_0^{\nu}\|_{L^2}^2
\exp\left\{\frac{C\|\mathbf{u}_0^{\nu}\|_{L^2}^4}{\nu^{4}}\right\}.
\end{align}
Thus $\bu^\nu \in L^\infty(0,T;H^1_{per}(\mathcal{D}))$. That $\bu^\nu \in L^2(0,T;H^2_{per})$ follows immediately upon revisiting \eqref{enestnablaunufinal} and integrating in time.

Uniqueness is easily obtained under the regularity of
$\mathbf{u}^{\nu}$. We omit the details.
\end{proof}

\section{Proof of main result }
 We will begin this section by obtaining an evolution equation for the helical swirl. Hereafter we assume that $\bu^\nu_0\in H^1_{per}(\mathcal{D})$ is a divergence free, helical vector field and $\bu^\nu \in L^\infty(0,T;H^1_{per}(\mathcal{D})) \cap L^2(0,T;H^2_{per}(\mathcal{D}))$ is the strong, helically symmetric, solution of \eqref{al} with initial velocity $\bu^\nu_0$, given in Theorem \ref{thm2}. Let $\eta^\nu \equiv \mathbf{u}^\nu \cdot \boldsymbol{\xi}$. Multiply the momentum equation in \eqref{al} by $\boldsymbol{\xi}$  to obtain, after direct calculations,
\begin{equation}\label{bb2}
\left\{
\begin{array}{l}
\partial_t \eta^{\nu}+(\mathbf{u}^{\nu}\cdot \nabla)\eta^{\nu}=\nu \Delta \eta^{\nu}
+2\nu (\partial_x u_2^{\nu}-\partial_y u_1^{\nu}),
\\[2mm]
\eta^{\nu}(t=0,{\rm x})=\eta_0^{\nu}.
\end{array}
\right.
\end{equation}
Clearly, in the case of the Euler equations ($\nu=0$),   the helical swirl $\eta^0:=\mathbf{u^0}\cdot
\boldsymbol{\xi}$  satisfies a transport equation and is conserved along particle paths. This is not the case if $\nu >0$.

Nevertheless, we may still obtain a uniform bound, with respect to $\nu$, for the helical swirl $\eta^{\nu}$.

\begin{lem}\label{lem1}
Fix $T>0$. Let $\mathbf{u}_0^{\nu}\in H^1_{per}(\mathcal{D})$ and $\eta^\nu = \bu_0^\nu\cdot\boldsymbol{\xi}$.
Then there exists a constant $c=c(T)>0,$ independent of $\nu$, such that
\begin{align}\label{b12}
\|\eta^{\nu}\|_{L^{\infty}(0,T;L^2(\mathcal{D}))}+
\sqrt{\nu} \|\nabla \eta^{\nu}\|_{L^2(0,T;L^2(\mathcal{D}))}
\leq c(\|\eta_0^\nu\|_{L^2(\mathcal{D})} + \sqrt{\nu}\|\bu_0^\nu\|_{L^2(\mathcal{D})}).
\end{align}
\end{lem}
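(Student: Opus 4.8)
The plan is to run the natural $L^2$ energy estimate on the transport--diffusion equation \eqref{bb2} for $\eta^\nu$. Multiplying the first equation in \eqref{bb2} by $\eta^\nu$ and integrating over $\mathcal{D}$, the convective term $\int_{\mathcal{D}}(\mathbf{u}^\nu\cdot\nabla)\eta^\nu\,\eta^\nu\,d\mbx$ vanishes because $\mathbf{u}^\nu$ is divergence free (integrate by parts and discard boundary terms using $z$-periodicity and horizontal decay), while the viscous term yields $-\nu\|\nabla\eta^\nu\|_{L^2(\mathcal{D})}^2$. This produces the identity
\[\frac{1}{2}\frac{d}{dt}\|\eta^\nu\|_{L^2(\mathcal{D})}^2+\nu\|\nabla\eta^\nu\|_{L^2(\mathcal{D})}^2 = 2\nu\int_{\mathcal{D}}\eta^\nu(\partial_x u_2^\nu-\partial_y u_1^\nu)\,d\mbx.\]
The right-hand side is $2\nu$ times the pairing of $\eta^\nu$ with the third vorticity component $\omega_3^\nu=\partial_x u_2^\nu-\partial_y u_1^\nu$, which is controlled pointwise by $|\nabla\mathbf{u}^\nu|$, so that $\|\omega_3^\nu\|_{L^2(\mathcal{D})}\le\|\nabla\mathbf{u}^\nu\|_{L^2(\mathcal{D})}$.

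The key step is to absorb this forcing term by Young's inequality with the correct distribution of powers of $\nu$. I will use $2\nu\int_{\mathcal{D}}\eta^\nu\omega_3^\nu\,d\mbx \le \|\eta^\nu\|_{L^2(\mathcal{D})}^2+\nu^2\|\nabla\mathbf{u}^\nu\|_{L^2(\mathcal{D})}^2$, i.e. a $\nu$-independent coefficient on $\|\eta^\nu\|_{L^2}^2$ and a factor $\nu^2$ on the vorticity term. Integrating in time from $0$ to $t$ and invoking the energy inequality \eqref{cc}, which gives $\nu\int_0^T\|\nabla\mathbf{u}^\nu\|_{L^2(\mathcal{D})}^2\,ds\le\|\mathbf{u}_0^\nu\|_{L^2(\mathcal{D})}^2$, the vorticity contribution becomes $2\nu^2\int_0^t\|\nabla\mathbf{u}^\nu\|_{L^2}^2\,ds\le 2\nu\|\mathbf{u}_0^\nu\|_{L^2}^2=2(\sqrt{\nu}\|\mathbf{u}_0^\nu\|_{L^2})^2$, which is precisely the scaling demanded by the right-hand side of \eqref{b12}.

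I thereby arrive at an integral inequality of the form
\[\|\eta^\nu(t)\|_{L^2}^2+2\nu\int_0^t\|\nabla\eta^\nu\|_{L^2}^2\,ds \le \|\eta_0^\nu\|_{L^2}^2+2\nu\|\mathbf{u}_0^\nu\|_{L^2}^2+2\int_0^t\|\eta^\nu\|_{L^2}^2\,ds.\]
Crucially, the coefficient of $\int_0^t\|\eta^\nu\|_{L^2}^2\,ds$ is now the $\nu$-independent constant $2$, so Gronwall's lemma produces a factor $e^{2T}$ and hence $\|\eta^\nu\|_{L^\infty(0,T;L^2)}^2\le e^{2T}(\|\eta_0^\nu\|_{L^2}^2+2\nu\|\mathbf{u}_0^\nu\|_{L^2}^2)$; a square root gives the first half of \eqref{b12} with $c=c(T)$. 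Substituting this $L^\infty$ bound back into the integral inequality controls $\nu\int_0^T\|\nabla\eta^\nu\|_{L^2}^2\,ds$ by $c(\|\eta_0^\nu\|_{L^2}^2+\nu\|\mathbf{u}_0^\nu\|_{L^2}^2)$, and another square root yields the second half of \eqref{b12}.

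The main obstacle is exactly this bookkeeping of powers of $\nu$ in the forcing term. A careless Young's inequality — for instance $2\nu\eta^\nu\omega_3^\nu\le\nu\|\eta^\nu\|_{L^2}^2+\nu\|\omega_3^\nu\|_{L^2}^2$ — would leave a $\nu$-dependent Gronwall coefficient and, more seriously, would produce a vorticity contribution of size $\|\mathbf{u}_0^\nu\|_{L^2}^2$ instead of $\nu\|\mathbf{u}_0^\nu\|_{L^2}^2$, destroying the claimed $\sqrt{\nu}\|\mathbf{u}_0^\nu\|_{L^2}$ dependence. The only other point needing care is the rigorous justification of the formal energy computation: since $\mathbf{u}^\nu\in L^\infty(0,T;H^1_{per}(\mathcal{D}))\cap L^2(0,T;H^2_{per}(\mathcal{D}))$ is the strong solution furnished by Theorem \ref{thm2}, $\eta^\nu$ is regular enough for the estimate, and the boundary terms in the integrations by parts vanish by $z$-periodicity together with an approximation by vector fields compactly supported in the horizontal variables.
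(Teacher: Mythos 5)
Your proof is correct, and it follows the paper's overall strategy --- the $L^2$ energy estimate on the swirl equation \eqref{bb2} --- but it closes the estimate differently at the one nontrivial step, namely the forcing term $2\nu\int_{\mathcal{D}}\eta^\nu(\partial_x u_2^\nu-\partial_y u_1^\nu)\,d\mbx$. The paper integrates this term by parts, moving the derivative onto $\eta^\nu$, bounds it by $2\nu\|\mathbf{u}^\nu\|_{L^2}\|\nabla\eta^\nu\|_{L^2}$, absorbs $\tfrac{\nu}{2}\|\nabla\eta^\nu\|_{L^2}^2$ into the left-hand side, and is left with $C\nu\|\mathbf{u}^\nu\|_{L^2}^2$, which is bounded uniformly in time by $C\nu\|\mathbf{u}_0^\nu\|_{L^2}^2$ via the $L^\infty(0,T;L^2)$ part of the energy inequality \eqref{cc}; a direct integration in time then yields \eqref{b12} with no Gronwall step and a constant growing only polynomially in $T$. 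You instead keep the derivative on $\mathbf{u}^\nu$, split by Young as $\|\eta^\nu\|_{L^2}^2+\nu^2\|\nabla\mathbf{u}^\nu\|_{L^2}^2$, spend the dissipation part of \eqref{cc}, $\nu\int_0^T\|\nabla\mathbf{u}^\nu\|_{L^2}^2\,ds\le\|\mathbf{u}_0^\nu\|_{L^2}^2$, to convert $\nu^2$ into the single factor of $\nu$ demanded by \eqref{b12}, and close with Gronwall (constant $e^{2T}$). Both routes are valid and give $c=c(T)$ independent of $\nu$ with the correct $\sqrt{\nu}\,\|\mathbf{u}_0^\nu\|_{L^2}$ scaling; the paper's absorption trick avoids Gronwall and any $\int_0^t\|\eta^\nu\|_{L^2}^2\,ds$ term altogether, while your version avoids the extra integration by parts and makes transparent that the gain of $\sqrt{\nu}$ comes from the enstrophy budget of the Navier--Stokes energy inequality. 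Two cosmetic points: the sharp constant is $\|\omega_3^\nu\|_{L^2}\le\sqrt{2}\,\|\nabla\mathbf{u}^\nu\|_{L^2}$ rather than $1$, which is harmless since constants are absorbed; and you correctly read \eqref{cc} with $\|\mathbf{u}_0^\nu\|_{L^2}^2$ on the right-hand side --- the missing square there is a typo in the paper.
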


\begin{proof}[Proof of Lemma \ref{lem1}]
Multiply both sides of \eqref{bb2} by $\eta^{\nu}$,
integrate the resulting equation in $\mathcal{D}$ and
use that $\dv \mathbf{u}^\nu = 0$ to obtain that
\begin{align}\label{b14}
\frac{1}{2}\frac{d}{dt}\|\eta^{\nu}\|_{L^2(\mathcal{D})}^2
+\nu \|\nabla \eta^{\nu}\|_{L^2(\mathcal{D})}^2
&\leq 2\nu \|\mathbf{u}^{\nu}\|_{L^2(\mathcal{D})}\|\nabla\eta^{\nu}\|_{L^2(\mathcal{D})}\nonumber\\
& \leq \frac{\nu}{2}\|\nabla\eta^{\nu}\|_{L^2(\mathcal{D})}^2
+C\nu\|\mathbf{u}^{\nu}\|_{L^2(\mathcal{D})}^2.
\end{align}
It follows from integration over the time from $0$ to $T$, together with inequality \eqref{cc},
that
\begin{align*}
\|\eta^{\nu}\|_{L^{\infty}(0,T;L^2(\mathcal{D}))}^2+\nu\|\nabla
\eta^{\nu}\|_{L^2(0,T;L^2(\mathcal{D}))}^2 \leq C(\|\eta^{\nu}_0\|_{L^2(\mathcal{D})}^2
+T\nu\|\bu_0^{\nu}\|_{L^2(\mathcal{D})}^2).
\end{align*}

Clearly, this concludes the proof.

\end{proof}

Using the decomposition \eqref{dec}, we introduce
\begin{align}\label{dec1}
\mathbf{U}^{\nu}\equiv \mathbf{u}^{\nu}-\eta^{\nu}\frac{\boldsymbol{\xi}}{|\boldsymbol{\xi}|^2}, \mbox{ and } \boldsymbol{\Omega}^\nu \equiv \curl \mathbf{U}^\nu.
\end{align}
Then  $\mathbf{U}^\nu\cdot \boldsymbol{\xi}=0$ and  $\mathbf{U}^\nu$ is helical.
As noted in  Remark 2.1, we have
\begin{equation}\label{4.2}
\partial_x u_2^{\nu}-\partial_y
u_1^{\nu}=\Omega_3^\nu+\partial_x\left(\frac{-x\eta^\nu}{|\boldsymbol{\xi}|^2}\right)-\partial_y\left(\frac{y\eta^\nu}{|\boldsymbol{\xi}|^2}\right).
\end{equation}
 Moreover, direct calculations give
\begin{align}
\left\{
\begin{array}{l}
\partial_t \mathbf{U}^{\nu}+\mathbf{U}^{\nu}\cdot \nabla \mathbf{U}^{\nu}+\nabla p^{\nu}-\nu \Delta \mathbf{U}^{\nu}\\ \\
\quad =-\displaystyle\frac{\eta^{\nu}}{|\boldsymbol{\xi}|^2}\partial_{\boldsymbol{\xi}}\mathbf{U}^{\nu}-\mathbf{U}^{\nu}\cdot \nabla \left(\displaystyle\frac{\boldsymbol{\xi}}{|\boldsymbol{\xi}|^2}\right)
\eta^{\nu}-\displaystyle\frac{(\eta^{\nu})^2}{|\boldsymbol{\xi}|^2}\partial_{\boldsymbol{\xi}}\left(\displaystyle\frac{\boldsymbol{\xi}}{|\boldsymbol{\xi}|^2}\right) \\ \\
\qquad +2\nu \nabla \eta^{\nu}\cdot
\nabla\left(\displaystyle\frac{\boldsymbol{\xi}}{|\boldsymbol{\xi}|^2}\right)
+\nu \eta^{\nu}\Delta \left(\displaystyle\frac{\boldsymbol{\xi}}{|\boldsymbol{\xi}|^2}\right)
-2\nu \Omega_3^{\nu} \displaystyle\frac{\boldsymbol{\xi}}{|\boldsymbol{\xi}|^2}\\ \\
\qquad\quad -2\nu \left[\curl\left(\displaystyle\frac{\eta^{\nu}\boldsymbol{\xi}}{|\boldsymbol{\xi}|^2}\right)\right]_3
\displaystyle\frac{\boldsymbol{\xi}}{|\boldsymbol{\xi}|^2},\\ \\
\dv \mathbf{U}^{\nu}=0.
\end{array}
\right.
\end{align}

 By Lemma \ref{lem2.4}, $\boldsymbol{\Omega}^{\nu} \equiv \Omega_3^{\nu}\boldsymbol{\xi}$, where
$\Omega_3^{\nu}=\partial_x U_2^{\nu}-\partial_y U_1^{\nu}$.
Direct calculation leads to the following equation for $\Omega_3^{\nu}$:
\begin{align}\label{e9}
\left\{
\begin{array}{l}
\partial_t \Omega_3^{\nu}+\mathbf{U}^{\nu}\cdot \nabla\Omega^{\nu}_3-\nu\Delta\Omega_3^{\nu}=\\ \\
\quad -2\left[\partial_x\left(\displaystyle\frac{\eta^{\nu}(x^2U_1^{\nu}+xyU_2^{\nu})}{|\boldsymbol{\xi}|^4}\right)+
\partial_y\left(\displaystyle\frac{\eta^{\nu}(xyU_1^{\nu}+y^2U_2^{\nu})}{|\boldsymbol{\xi}|^4}\right)\right]\\ \\
\quad
+2\left[
\partial_x\left(\displaystyle\frac{\eta^{\nu}U_1^{\nu}}{|\boldsymbol{\xi}|^4}\right)
+
\partial_y \left(\displaystyle\frac{\eta^{\nu}U_2^{\nu}}{|\boldsymbol{\xi}|^4}\right)
\right]
-\partial_z \left(\displaystyle\frac{(\eta^{\nu})^2}{|\boldsymbol{\xi}|^4}\right)
\\ \\
\quad
-2\nu \left[\partial_x\left(\displaystyle\frac{\partial_x \eta^{\nu}}{|\boldsymbol{\xi}|^2}\right)
 +\partial_y \left(\displaystyle\frac{\partial_y \eta^{\nu}}{|\boldsymbol{\xi}|^2}\right)
 \right]
 \\ \\
\quad
+2\nu\left[\partial_x\left(\displaystyle\frac{x^2\partial_x \eta^{\nu}+xy\partial_y \eta^{\nu}}{|\boldsymbol{\xi}|^4}\right)+
\partial_y \left(\displaystyle\frac{xy\partial_x \eta^{\nu}+y^2\partial_y \eta^{\nu}}{|\boldsymbol{\xi}|^4}\right)
\right]\\ \\
\quad +4\nu \left[\partial_x \left(\displaystyle\frac{x\eta^{\nu}}{|\boldsymbol{\xi}|^6}\right)
+\partial_y \left(\displaystyle\frac{y\eta^{\nu}}{|\boldsymbol{\xi}|^6}\right)
\right]
+2\nu \left[\partial_x \left(\Omega_3^{\nu}\displaystyle\frac{x}{|\boldsymbol{\xi}|^2}\right)
+\partial_y \left(\Omega_3^{\nu}\displaystyle\frac{y}{|\boldsymbol{\xi}|^2}\right)
\right],\\ \\
\Omega_3^{\nu}(t=0,x)=\Omega_{3,0}^{\nu}.
\end{array}
\right.
\end{align}

The following  is a key estimate which will be used to obtain the
compactness of the family of solutions to the Navier-Stokes
equations \eqref{al}, $\nu >0$.

\begin{lem}\label{lem2}
Fix $T>0$. Let $\nu \leq 1$. Assume that $\mathbf{u}_0^{\nu}\in H^1_{per}(\mathcal{D}),$ and
$\eta^{\nu}_0\in L^2(\mathcal{D})$ with
$\|\eta_0^{\nu}\|_{L^2}\le c\nu$. Then, there exists $c=c(T,\|\bu_0^\nu \|_{H^1_{per}(\mathcal{D})})>0$ such that
\begin{align}\label{e11}
\|\Omega_3^{\nu}\|_{L^{\infty}(0,T; L^2)}\leq c.
\end{align}
Furthermore,
\begin{align}\label{RevHeliSwi}
\|\eta^{\nu}\|_{L^{\infty}(0,T;L^2(\mathcal{D}))}+\sqrt{\nu}\|\nabla
\eta^{\nu}\|_{L^2(0,T;L^2(\mathcal{D}))}\leq C\nu,
\end{align}
for some constant $C = C(T,\|\bu^\nu_0\|_{H^1_{per}(\mathcal{D})})>0$ which is independent of $\nu$
\end{lem}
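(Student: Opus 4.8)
The plan is to establish \eqref{e11} and \eqref{RevHeliSwi} \emph{simultaneously}, through a continuity (bootstrap) argument in which the two bounds reinforce one another. On a maximal subinterval $[0,T^*)\subseteq[0,T]$ I assume the bootstrap hypotheses $\|\eta^\nu\|_{L^\infty(0,t;L^2)}\le K\nu$ and $\|\nabla\eta^\nu\|_{L^2(0,t;L^2)}\le K'\sqrt{\nu}$, with $K,K'$ to be fixed; Lemma \ref{lem1}, together with $\|\eta_0^\nu\|_{L^2}\le c\nu$, supplies the weaker a priori control $\|\eta^\nu\|_{L^\infty L^2}=\mathcal{O}(\sqrt{\nu})$ and $\|\nabla\eta^\nu\|_{L^2L^2}=\mathcal{O}(1)$, which starts the argument. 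Throughout I record the unconditional energy bounds from \eqref{cc}, namely $\|\mathbf{u}^\nu\|_{L^\infty(0,T;L^2)}\le\|\mathbf{u}_0^\nu\|_{L^2}$ and $\|\nabla\mathbf{u}^\nu\|_{L^2(0,T;L^2)}\le\nu^{-1/2}\|\mathbf{u}_0^\nu\|_{L^2}$; via \eqref{dec1} and the boundedness of the coefficients $\boldsymbol{\xi}/|\boldsymbol{\xi}|^2$ and their derivatives, these transfer to $\|\mathbf{U}^\nu\|_{L^\infty L^2}=\mathcal{O}(1)$ and, crucially, $\|\nabla\mathbf{U}^\nu\|_{L^2L^2}=\mathcal{O}(\nu^{-1/2})$.

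To close \eqref{e11} I run an $L^2$ energy estimate on \eqref{e9}: multiplying by $\Omega_3^\nu$ and integrating over $\mathcal{D}$, the transport term drops since $\dv\mathbf{U}^\nu=0$, and I retain the dissipation $\nu\|\nabla\Omega_3^\nu\|_{L^2}^2$. I integrate every term on the right by parts so that the derivative falls on $\Omega_3^\nu$; each coefficient that appears is a bounded rational function of $(x,y)$ (a monomial over a power of $|\boldsymbol{\xi}|^2$, numerator degree at most the denominator's), hence uniformly bounded. The forcing then splits into two groups. The $\nu$-proportional terms are handled by Young's inequality, absorbing $\tfrac{\nu}{2}\|\nabla\Omega_3^\nu\|^2$ into the dissipation and leaving $\nu C(\|\nabla\eta^\nu\|^2+\|\eta^\nu\|^2+\|\Omega_3^\nu\|^2)$; after integration in time the first two are $\mathcal{O}(\nu)$ by the $\eta$-bounds and the last feeds Gronwall. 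The genuinely delicate group consists of the order-one terms quadratic in the swirl, of the schematic form $\eta^\nu\mathbf{U}^\nu$ and $(\eta^\nu)^2$. For these I invoke the helical Ladyzhenskaya inequality (Lemma \ref{lem2.2}), $\|\eta^\nu\|_{L^4}^2\le C\|\eta^\nu\|_{L^2}\|\nabla\eta^\nu\|_{L^2}$ and $\|\mathbf{U}^\nu\|_{L^4}^2\le C\|\mathbf{U}^\nu\|_{L^2}\|\nabla\mathbf{U}^\nu\|_{L^2}$, together with Young; after integrating in time and using Cauchy--Schwarz the cross term is bounded by $CK\,\|\nabla\eta^\nu\|_{L^2L^2}\|\nabla\mathbf{U}^\nu\|_{L^2L^2}\le CKK'\sqrt{\nu}\cdot\mathcal{O}(\nu^{-1/2})=\mathcal{O}(1)$, while the $(\eta^\nu)^2$ term is of order $\nu$. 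A Gronwall step then gives $\|\Omega_3^\nu\|_{L^\infty(0,t;L^2)}^2\le(\|\Omega_{3,0}^\nu\|_{L^2}^2+C)e^{C\nu t}\le c$, uniformly in $\nu$; here $\|\Omega_{3,0}^\nu\|_{L^2}$ is controlled through \eqref{formforomega3} by $\|\mathbf{u}_0^\nu\|_{H^1}$ and $\|\eta_0^\nu\|_{L^2}\le c\nu$.

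With \eqref{e11} in hand I upgrade the swirl bound. Using \eqref{4.2} I rewrite the forcing $2\nu(\partial_x u_2^\nu-\partial_y u_1^\nu)$ in \eqref{bb2} as $2\nu\Omega_3^\nu$ plus a divergence in $\eta^\nu$; testing \eqref{bb2} against $\eta^\nu$, the advection term vanishes and I keep $\nu\|\nabla\eta^\nu\|^2$. The term $2\nu\int\Omega_3^\nu\eta^\nu\le 2\nu\|\Omega_3^\nu\|_{L^2}\|\eta^\nu\|_{L^2}$ is now linear in $\|\eta^\nu\|_{L^2}$ with the \emph{uniformly bounded} factor $\|\Omega_3^\nu\|_{L^2}$, and the remaining $\nu$-forcing integrates by parts to $\lesssim\nu\|\eta^\nu\|_{L^2}\|\nabla\eta^\nu\|_{L^2}$, whose bad part Young absorbs. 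The decisive point is to divide the resulting differential inequality by $\|\eta^\nu\|_{L^2}$, producing the \emph{linear} bound $\tfrac{d}{dt}\|\eta^\nu\|_{L^2}\le 2\nu\|\Omega_3^\nu\|_{L^2}+C\nu\|\eta^\nu\|_{L^2}$; Gronwall then yields $\|\eta^\nu\|_{L^\infty(0,t;L^2)}\le e^{C\nu T}(\|\eta_0^\nu\|_{L^2}+2\nu T\sup_t\|\Omega_3^\nu\|_{L^2})\le C\nu$. This is precisely the mechanism that sharpens the $\mathcal{O}(\sqrt{\nu})$ of Lemma \ref{lem1} to the required $\mathcal{O}(\nu)$; working with $\|\eta^\nu\|^2$ and Young's inequality instead would forfeit a half power. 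Reinserting this into the time-integrated square energy identity gives $\nu\|\nabla\eta^\nu\|_{L^2L^2}^2\le C\nu^2$, i.e. $\sqrt{\nu}\|\nabla\eta^\nu\|_{L^2L^2}\le C\nu$, completing \eqref{RevHeliSwi}. Since the bootstrap quantities are thereby recovered with strictly smaller constants (for $K,K'$ large and $\nu$ small), continuity forces $T^*=T$.

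The main obstacle is exactly the order-one terms $\eta^\nu\mathbf{U}^\nu$ in \eqref{e9}---the vortex-stretching contributions singled out in the introduction as uncontrollable term by term in $\nu$. Estimated crudely they carry a factor $\nu^{-1/2}$ coming from $\|\nabla\mathbf{U}^\nu\|_{L^2L^2}=\mathcal{O}(\nu^{-1/2})$; they become harmless only because the coupling supplies the matching gain $\|\eta^\nu\|_{L^2}\,\|\nabla\eta^\nu\|_{L^2L^2}=\mathcal{O}(\nu)\cdot\mathcal{O}(\sqrt{\nu})$. This is what makes the three structural ingredients indispensable: the decomposition \eqref{dec} isolating the swirl into $\mathbf{U}^\nu$, the linear-Gronwall device producing the sharp $\mathcal{O}(\nu)$ swirl estimate, and the simultaneous bootstrap that lets each bound feed the other.
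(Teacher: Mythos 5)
Your scheme reproduces the paper's two coupled energy estimates (for $\eta^\nu$ from \eqref{bb2} via \eqref{4.2}, and for $\Omega_3^\nu$ from \eqref{e9}, with the helical Ladyzhenskaya inequality handling the quadratic terms), and your linear-Gronwall device for the swirl --- dividing the energy inequality by $\|\eta^\nu\|_{L^2}$ to get $\frac{d}{dt}\|\eta^\nu\|_{L^2}\le 2\nu\|\Omega_3^\nu\|_{L^2}+C\nu\|\eta^\nu\|_{L^2}$ --- is correct and does yield the sharp $\mathcal{O}(\nu)$ bound \emph{once} the uniform bound \eqref{e11} on $\Omega_3^\nu$ is known. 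The genuine gap is in how you obtain \eqref{e11}: your bootstrap does not close. You estimate the critical cross terms using the viscous energy bound $\|\nabla \mathbf{U}^\nu\|_{L^2(0,T;L^2)}=\mathcal{O}(\nu^{-1/2})$, so after Young's inequality and time integration their contribution to $\sup_t\|\Omega_3^\nu\|_{L^2}^2$ is of size $\frac{c}{\nu}\cdot K\nu\cdot\|\nabla\eta^\nu\|_{L^2_tL^2}\|\nabla\mathbf{U}^\nu\|_{L^2_tL^2}\approx cK\cdot K'\sqrt{\nu}\cdot\nu^{-1/2}=cKK'$: the powers of $\nu$ cancel \emph{exactly}, leaving no small factor. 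Feeding this back, the recovered bootstrap constants are $K_{\mathrm{new}}\approx C\bigl(1+T\sqrt{KK'}\bigr)$ and $K'_{\mathrm{new}}\approx C\sqrt{1+KK'}$, i.e.\ the recovery map is homogeneous of degree one in $\sqrt{KK'}$. Your claim that the bounds are recovered ``with strictly smaller constants (for $K,K'$ large and $\nu$ small)'' is therefore false: taking $K=K'$ the recovered constant is $\approx CK$, which improves on $K$ only if the fixed constant $C$ (containing $T$, $\|\mathbf{u}_0^\nu\|_{L^2}$ and Ladyzhenskaya constants) is less than $1$; largeness of $K,K'$ cannot help a degree-one recovery, and $\nu$-smallness never enters the dominant term. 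So the continuity argument as stated does not force $T^*=T$.

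The paper avoids this trap by never invoking the $\nu^{-1/2}$ bound on $\nabla\mathbf{U}^\nu$. Since $\dv\mathbf{U}^\nu=0$ and $\curl\mathbf{U}^\nu=\Omega_3^\nu\boldsymbol{\xi}$, the div--curl estimate of Lemma \ref{lem2.3}, applied to the weighted field $\mathbf{U}^\nu/|\boldsymbol{\xi}|^{\alpha}$, gives \eqref{b2}:
\begin{equation*}
\left\|\nabla\left(\frac{\mathbf{U}^{\nu}}{|\boldsymbol{\xi}|^{\alpha}}\right)\right\|_{L^2}\le \|\Omega_3^{\nu}\|_{L^2}+(1+\alpha)\|\mathbf{U}^{\nu}\|_{L^2},
\end{equation*}
uniformly in $\nu$ and pointwise in time --- this is precisely the structural payoff of the decomposition \eqref{dec} that your argument leaves unused. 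With it, the dangerous forcing is bounded by $C\|\nabla\eta^\nu\|_{L^2}(1+Y(t))^{1/2}(1+\|\Omega_3^\nu\|_{L^2})$, where $Y(t)=\int_0^t\|\Omega_3^\nu\|_{L^2}^2$ and the swirl bound \eqref{e13} is proved \emph{unconditionally} in terms of $Y$ (no bootstrap hypothesis needed). This produces the second-order differential inequality \eqref{e18}, which closes after one integration using only the crude bound $\|\nabla\eta^\nu\|_{L^2(0,T;L^2)}\le c$ from Lemma \ref{lem1}. If you replace your energy-inequality estimate of $\nabla\mathbf{U}^\nu$ by this div--curl bound, your pointwise-in-time Gronwall closes by itself, the continuity argument becomes unnecessary, and your proof reduces to essentially the paper's.
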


\begin{proof}[Proof of Lemma \ref{lem2}]
Let $t\in [0,T)$ and set
\begin{align}
Y=Y(t):=\int_0^t\|\Omega_3^{\nu}\|_{L^2}^2.
\end{align}	
Then $$Y^{\prime}(t)=\|\Omega_3^{\nu}\|_{L^2}^2,\ \ Y^{\prime\prime}(t)=\frac{d}{dt}\|\Omega_3^{\nu}\|_{L^2}^2.$$

We claim that
\begin{align}\label{e13}
\|\eta^{\nu}\|_{L^{\infty}(0,t;L^2(\mathcal{D}))}+\sqrt{\nu}\|\nabla
\eta^{\nu}\|_{L^2(0,t;L^2(\mathcal{D}))}\leq C\nu (1+Y(t))^{\frac 12},
\end{align}
where $C$ depends on $T$ but is independent of $\nu.$

Indeed, as in the proof of Lemma \ref{lem1}, we multiply the both sides of \eqref{bb2} by $\eta^{\nu}$, integrate the resulting equation in $\mathcal{D}$, and
use the divergence free condition, to obtain
\begin{align}\label{b14}
&\frac{1}{2}\frac{d}{dt}\|\eta^{\nu}\|_{L^2(\mathcal{D})}^2
+\nu \|\nabla \eta^{\nu}\|_{L^2(\mathcal{D})}^2\nonumber\\
&\qquad\leq 2\nu \|\partial_x u_2^{\nu}-\partial_y u_1^{\nu}\|_{L^2(\mathcal{D})}\|\eta^{\nu}\|_{L^2(\mathcal{D})}\nonumber\\
&\qquad\leq 2\nu (\|\Omega_3^{\nu}\|_{L^2(\mathcal{D})}+\|\nabla \eta^{\nu}\|_{L^2(\mathcal{D})}+\|\eta^{\nu}\|_{L^2(\mathcal{D})})\|\eta^{\nu}\|_{L^2(\mathcal{D})}\nonumber\\
&\qquad \leq C\nu^2\|\Omega_3^\nu\|_{L^2(\mathcal{D})}^2 + \frac{\nu}{2}\|\nabla\eta^\nu\|_{L^2(\mathcal{D})}^2 + C\|\eta^\nu\|_{L^2(\mathcal{D})}^2.
\end{align}
where we have used identity \eqref{4.2} and Young's inequality. This gives the estimate
\begin{equation} \label{intermed}
\frac{1}{2}\frac{d}{dt}\|\eta^{\nu}\|_{L^2(\mathcal{D})}^2 +\frac{\nu}{2} \|\nabla \eta^{\nu}\|_{L^2(\mathcal{D})}^2
\leq C\nu^2\|\Omega_3^\nu\|_{L^2(\mathcal{D})}^2 + C\|\eta^\nu\|_{L^2(\mathcal{D})}^2.
\end{equation}

It follows from Gronwall's lemma, upon performing parabolic regularity estimates, that
\[
\|\eta^{\nu}\|_{L^{\infty}(0,t;L^2(\mathcal{D}))}^2+\nu\|\nabla
\eta^{\nu}\|_{L^2(0,t;L^2(\mathcal{D}))}^2
\leq C(\|\eta^{\nu}_0\|_{L^2(\mathcal{D})}^2+\nu^2 Y(t)),\]
for some constant $C=C(T)>0$.
Finally, condition (2) in Theorem \ref{thm}
yields that
\begin{equation}\label{c4}
\|\eta^{\nu}\|_{L^{\infty}(0,t;L^2(\mathcal{D}))}^2+\nu\|\nabla
\eta^{\nu}\|_{L^2(0,t;L^2(\mathcal{D}))}^2   \leq C\nu^2 (1+Y(t))
\end{equation}
i.e.,
\begin{align}\label{cc4}
\|\eta^{\nu}\|_{L^{\infty}(0,t;L^2(\mathcal{D}))}+\sqrt{\nu}\|\nabla
\eta^{\nu}\|_{L^2(0,t;L^2(\mathcal{D}))} \leq C\nu (1+Y(t))^{\frac 12},
\end{align}
where $C=C(T)>0$ is a constant which is independent of $\nu.$ We have established \eqref{e13}.

Next, we use \eqref{e13} to derive estimate \eqref{e11} for $\Omega_3^{\nu}.$ Multiplying both sides of \eqref{e9} by
$\Omega_3^{\nu}$   and integrating in
$\mathcal{D}$, gives
\begin{align*}
&\frac{1}{2}\frac{d}{dt}\|\Omega_3^{\nu}\|_{L^2}^2
+\nu \|\nabla \Omega_3^{\nu}\|_{L^2}^2\\
&\ \ \ \leq 4\int_{\mathcal{D}}\left|\nabla \Omega_3^{\nu} \frac{\eta^{\nu}\mathbf{U}^{\nu}}{|\boldsymbol{\xi}|^4}\right| d\mathbf{x}
+2\int_{\mathcal{D}}\left|\nabla \Omega_3^{\nu}
\frac{\eta^{\nu} \mathbf{U}^{\nu}}{|\boldsymbol{\xi}|^2}\right|d\mathbf{x}
+\int_{\mathcal{D}} \left|\nabla \Omega_3^{\nu}
\frac{|\eta^{\nu}|^2}{|\boldsymbol{\xi}|^4}\right| d\mathbf{x}\\
&\ \ \ +6\nu\int_{\mathcal{D}}\left|\nabla \Omega_3^{\nu} \frac{\nabla \eta^{\nu}}{|\boldsymbol{\xi}|^2}\right|d\mathbf{x}
+8\nu \int_{\mathcal{D}}\left|\nabla \Omega_3^{\nu} \frac{\eta^{\nu}}{|\boldsymbol{\xi}|^4}\right|d\mathbf{x}
+4\nu \int_{\mathcal{D}} |\Omega_3^{\nu}|^2 d\mathbf{x}.
\end{align*}
Then, using Cauchy's inequality together with Young's inequality leads to
\begin{align}
&\frac{d}{dt}\|\Omega_3^{\nu}\|_{L^2}^2+\frac{\nu}{2} \|\nabla \Omega_3^{\nu}\|_{L^2}^2\nonumber\\
&\qquad \leq \frac{c}{\nu}
\left\|\frac{\eta^{\nu} \mathbf{U}^{\nu}}{|\boldsymbol{\xi}|^4}\right\|_{L^2}^2+
\frac{c}{\nu}
\left\|\frac{\eta^{\nu} \mathbf{U}^{\nu}}{|\boldsymbol{\xi}|^2}\right\|_{L^2}^2
+\frac{c}{\nu}\left\|\frac{(\eta^{\nu})^2}{|\boldsymbol{\xi}|^4}\right\|_{L^2}^2\nonumber\\
&\qquad \quad +c\nu \left\|\frac{\nabla \eta^{\nu}}{|\boldsymbol{\xi}|^2}\right\|_{L^2}^2+
c \nu
\left\|\frac{\eta^{\nu}}{|\boldsymbol{\xi}|^4}\right\|_{L^2}^2+c\nu\|\Omega_3^{\nu}\|_{L^2}^2.
\label{b3}
\end{align}
From Lemma \ref{lem2.2}, together with H$\rm\ddot{o}$lder's inequality and \eqref{e13}, it
follows that, for any $\alpha>1$,
\begin{align}
\left\|\frac{\eta^{\nu} \mathbf{U}^{\nu}}{|\boldsymbol{\xi}|^{\alpha}}\right\|_{L^2}^2
& \leq \|\eta^{\nu}\|_{L^4}^2\left\|\frac{\mathbf{U}^{\nu}}{|\boldsymbol{\xi}|^{\alpha}}\right\|_{L^4}^2
\nonumber\\
&\leq \|\eta^{\nu}\|_{L^2}\|\nabla \eta^{\nu}\|_{L^2}\left\|\frac{\mathbf{U}^{\nu}}{|\boldsymbol{\xi}|^{\alpha}}\right\|_{L^2}
\left\|\nabla (\frac{\mathbf{U}^{\nu}}{|\boldsymbol{\xi}|^{\alpha}})\right\|_{L^2}\label{b1}\\
&\leq C\nu (1+Y(t))^{\frac 12} \|\nabla \eta^{\nu}\|_{L^2} \|\mathbf{U}^{\nu}\|_{L^2} \left\|\nabla(\frac{\mathbf{U}^{\nu}}{|\boldsymbol{\xi}|^{\alpha}})\right\|_{L^2}.\nonumber
\end{align}
Using the result in Lemma \ref{lem2.3}, we find
\begin{align}\label{b2}
\left\|\nabla \left(\frac{\mathbf{U}^{\nu}}{|\boldsymbol{\xi}|^{\alpha}}\right)\right\|_{L^2}
&\leq \left(\left\|\curl\left(\frac{\mathbf{U}^{\nu}}{|\boldsymbol{\xi}|^{\alpha}}\right)\right\|_{L^2}
+\left\|\dv\left(\frac{\mathbf{U}^{\nu}}{|\boldsymbol{\xi}|^{\alpha}}\right)\right\|_{L^2}\right)
\nonumber\\
\nonumber\\
&\leq \left\|\frac{\Omega_3^{\nu}}{|\boldsymbol{\xi}|^{\alpha-1}}\right\|_{L^2}+
\alpha\left\|\frac{\boldsymbol{\xi}\times
\mathbf{U}^{\nu}}{|\boldsymbol{\xi}|^{\alpha+2}}\right\|_{L^2}+\alpha
\left\|\frac{\mathbf{U}^{\nu}\cdot\boldsymbol{\xi}}{|\boldsymbol{\xi}|^{\alpha+2}}\right\|_{L^2}
\nonumber\\[3mm]
&\leq \|\Omega_3^{\nu}\|_{L^2}+(1+\alpha)\|\mathbf{U}^{\nu}\|_{L^2}.
\end{align}
Substituting  \eqref{b2} into \eqref{b1} together with the fact that $$\|\boldsymbol{U}^{\nu}\|_{L^{\infty}(0,T;L^2)}\leq \|\bu^{\nu}\|_{L^{\infty}(0,T;L^2)}+\|\eta^{\nu}\|_{L^{\infty}(0,T;L^2)}\leq \|\bu_0^{\nu}\|_{L^2}+c$$ from
\eqref{cc}, \eqref{dec1}, and \eqref{b12}, we have
\begin{align}\label{b4}
\left\|\frac{\eta^{\nu}\mathbf{U}^{\nu}}{|\boldsymbol{\xi}|^{\alpha}}\right\|_{L^2}^2\leq
c\nu\|\nabla \eta^{\nu}\|_{L^2} (1+Y(t))^{\frac 12}(1+\|\Omega_3^{\nu}\|_{L^2}),
\end{align}
where $c$ depends on $\|\bu_0^{\nu}\|_{L^2},\ \|\eta_0^{\nu}\|_{L^2}$ and $T$, independent of $\nu.$

Moreover, noting that
\begin{align}\label{b5}
\left\|\frac{(\eta^{\nu})^2}{|\boldsymbol{\xi}|^4}\right\|_{L^2}\leq \|\eta^{\nu}\|_{L^4}^2
\leq c\|\eta^{\nu}\|_{L^2}\|\nabla \eta^{\nu}\|_{L^2},
\end{align}
we find, by substituting \eqref{b4} and \eqref{b5} into \eqref{b3} and using \eqref{b12} and Young's inequality, that
\begin{align}\label{b7}
&\frac{d}{dt}\|\Omega_3^{\nu}\|_{L^2}^2+\frac{\nu}{2} \|\nabla \Omega_3^{\nu}\|_{L^2}^2\nonumber\\
&\quad \leq
C\|\nabla \eta^{\nu}\|_{L^2}(1+Y(t))^{\frac 12}(1+\|\Omega_3^{\nu}\|_{L^2})
+C\|\nabla \eta^{\nu}\|_{L^2}^2\\
&\quad\qquad\quad +C\nu \|\nabla \eta^{\nu}\|_{L^2}^2+C\nu^2+C\nu \|\Omega_3^{\nu}\|_{L^2}^2\nonumber\\
&\quad\leq C\|\nabla \eta^{\nu}\|_{L^2}^2(1+Y(t))+C(1+\|\Omega_3^{\nu}\|_{L^2}^2)+C\|\Omega_3^{\nu}\|_{L^2}^2+C\|\nabla \eta^{\nu}\|_{L^2}^2, \nonumber
\end{align}
since $\nu \leq 1.$

Recall that
\begin{align}
Y(t)=\int_0^t\|\Omega_3^{\nu}\|_{L^2}^2, Y^{\prime}(t)=\|\Omega_3^{\nu}\|_{L^2}^2,
 Y^{\prime\prime}(t)=\frac{d}{dt}\|\Omega_3^{\nu}\|_{L^2}^2,
\end{align}
so that \eqref{b7} implies that
\begin{align}\label{e18}
Y^{\prime\prime}(t)&\leq C(1+\|\nabla \eta^{\nu}\|_{L^2}^2Y(t))+CY^{\prime}(t)+
C\|\nabla \eta^{\nu}\|_{L^2}^2.
\end{align}
Integrating \eqref{e18} from $0$ to $t$ and using $Y(0)=0$ and $Y'(0)=\|\Omega_{3,0}^{\nu}\|_{L^2}$, we obtain
\begin{align}\label{ccc}
&Y^{\prime}(t)-Y^{\prime}(0) \nonumber\\
&\ \ \leq C\left( t+ Y(t)\int_0^t\|\nabla \eta^{\nu}(s)\|_{L^2}^2\, ds \right)+C\left( Y(t)+\int_0^t\|\nabla \eta^{\nu}\|_{L^2}^2 \, ds \right).
\end{align}
By virtue of \eqref{b12}, it follows that $\|\nabla\eta^{\nu}\|_{L^2(0,t;L^2)}\leq c$, for a constant $c=c(T,\|\eta_0^\nu\|_{L^2(\mathcal{D})}), \|\bu_0^\nu\|_{L^2(\mathcal{D})})>0$. Then \eqref{ccc} becomes
\begin{align}\label{e20}
Y^{\prime}(t)-CY(t)\leq Y^{\prime}(0)+Ct+C.
\end{align}
Consequently,
\begin{align*}%\label{e21}
Y(t)\leq C(1+Y^{\prime}(0))e^{-Ct},
\end{align*}
i.e.,
\begin{align}\label{b6}
\|\Omega_3^{\nu}\|_{L^2(0,T;L^2)}\leq C(1+\|\Omega_{3,0}^{\nu}\|_{L^2}),
\end{align}
where $C>0$ depends on $T$.

Combining \eqref{b6} with \eqref{e20}, we  get
\begin{align}\label{e22}
\|\Omega_3^{\nu}\|_{L^{\infty}(0,T;L^2(\mathcal{D}))}
\leq c,
\end{align}
where the constant $c$ only depends on $T,\|\Omega_{3,0}^{\nu}\|_{L^2}$ and $\|\mathbf{u}_0^{\nu}\|_{H^1_{per}(\mathcal{D})},$ but not on $\nu$.
\end{proof}

We are now ready to prove our main result.

\begin{proof}[Proof of Theorem \ref{thm}]
The proof will proceed in three broad steps. First we will show that $\{\bu^{\nu}\}_{\nu>0}$ is a compact subset of $L^2(0,T;L^2(\mathcal{D}))$. Then we will pass to subsequences as needed and show that there is a limit, $\bu^0$, which is in $C(0,T;L^2) \cap L^2(0,T;H^1_{per,loc})$, which is helical, has vanishing helical swirl, and satisfies the weak formulation of the Euler equations. Finally, we will show that $\bu^0 \in L^\infty(0,T;H^1_{per,loc})$.

Recall \eqref{formforomega} and \eqref{formforomega3}. Then,
\begin{align} %\label{formforderiv}
& \dv \bu^\nu = 0 \label{divunu} \\
& \curl \bu^\nu = \left[\Omega_3^\nu - \partial_x\left( \frac{\eta^\nu \,x}{|\boldsymbol{\xi}|^2} \right)
- \partial_y\left( \frac{\eta^\nu \, y}{|\boldsymbol{\xi}|^2}\right) \right]
\boldsymbol{\xi}
+ (\partial_y \eta^\nu, -\partial_x \eta^\nu, 0). \label{curlunu}
\end{align}

Condition (1) of Theorem \ref{thm} implies that $\|\bu_0^\nu\|_{H^1_{per}(\mathcal{D})} \leq C$ and, hence, from the energy inequality in Theorem \ref{thm2}, \eqref{cc}, it follows that $\{\bu^\nu\}_{\nu>0}$ is a  bounded subset of $L^{\infty}(0,T;L^2(\mathcal{D}))$.

From condition (2) of Theorem \ref{thm} together with Lemma \ref{lem2}, \eqref{RevHeliSwi}, we find
\begin{equation} \label{etanugoeszero}
\eta^\nu \to 0 \mbox{ strongly in } L^{\infty}(0,T;L^2(\mathcal{D})),
\end{equation}
and
\begin{equation} \label{gradetanugoeszero}
\nabla \eta^\nu \to 0 \mbox{ strongly in } L^2(0,T;L^2(\mathcal{D})).
\end{equation}

In addition, from Lemma \ref{lem2} we obtained a uniform estimate, with respect to $\nu$, for $\Omega_3^\nu$ in $L^\infty(0,T;L^2)$. Putting these estimates together yields $\curl \bu^\nu$ uniformly bounded in $L^2(0,T;L^2_{loc}(\mathcal{D}))$. (The subscript `loc' is due to the growth of $\boldsymbol{\xi}$ at infinity.) Hence, from Lemma \ref{lem2.3} it follows that $\{\bu^\nu\}_{\nu>0}$ is a bounded subset of $L^2(0,T;H^1_{per,loc} (\mathcal{D}))$.

Therefore, for any bounded sub-domain  $\mathcal{U} \subset \mathcal{D}$, we have that
$\{\mathbf{u}^{\nu}\}_{\nu>0}$ is a bounded subset of
$L^{2}(0,T;H^1(\mathcal{U}))$. In addition, we may use equation \eqref{al} to deduce that $\{\partial_t \mathbf{u}^{\nu}\}_{\nu>0}$ is a bounded subset of $L^2(0,T;H^{-1}(\mathcal{U}))$. It follows from the Aubin-Lions compactness theorem, see \cite{lions}, that $\{\bu^\nu\}_{\nu>0}$ is a compact subset of $L^2(0,T;L^2(\mathcal{U}))$. We may now use a diagonal argument to pass to a subsequence, which we will not relabel, which converges strongly
in $L^2([0,T];L^2_{loc}(\mathcal{D}))$. Passing to a further subsequence if needed, we may assume the convergence is also weak in
$L^2(0,T; H^1_{per,loc}(\mathcal{D}))$.

It is standard that strong convergence in $L^2([0,T];L^2_{loc}(\mathcal{D}))$ is sufficient to show that the limit vector field, denoted $\mathbf{u}^0$, satisfies the weak formulation of the Euler equations in Definition \ref{weakEuler}.

The bounds in $L^2(0,T;H^1_{per,loc} (\mathcal{D}))$, for $\bu^\nu$, and in $L^2(0,T;H^{-1}_{loc}(\mathcal{D})$, for $\partial_t\bu^\nu$, imply that $\{\bu^\nu\}_{\nu>0}$ is a bounded subset of $C^0(0,T;L^2(\mathcal{D}))$. (There is no need to localize this estimate, due to the previous uniform estimate in $L^\infty(0,T;L^2(\mathcal{D}))$.) It follows that $\bu^0 \in C^0(0,T;L^2(\mathcal{D}))$.

It is easy to see that $\bu^0(\cdot, t)$ is a helical vector field, for each $0\leq t < T$ and, also, that $\eta^0 \equiv \bu^0\cdot\boldsymbol{\xi} = 0$.

We have established all conditions of Definition \ref{weakEuler} but one. It remains only to verify that $\bu^0 \in L^\infty(0,T;H^1_{per,loc}(\mathcal{D}))$.

To see this we first note that, in view of \eqref{curlunu},
\[
\Omega_3^\nu
 =
\curl \bu^\nu \cdot \frac{\boldsymbol{\xi}}{|\boldsymbol{\xi}|^2}  +  \partial_x\left( \frac{\eta^\nu \,x}{|\boldsymbol{\xi}|^2} \right)
+ \partial_y\left( \frac{\eta^\nu \, y}{|\boldsymbol{\xi}|^2}\right)
- (\partial_y \eta^\nu, -\partial_x \eta^\nu, 0)\cdot \frac{\boldsymbol{\xi}}{|\boldsymbol{\xi}|^2}.
\]

Each of the terms on the right-hand-side converges, as $\nu \to 0$, weakly in $L^2(0,T;L^2(\mathcal{D}))$ and, in view of \eqref{etanugoeszero} and \eqref{gradetanugoeszero}, the weak limit, in $L^2(0,T;L^2(\mathcal{D})$, of the right-hand-side is $\curl \bu^0 \cdot \frac{\boldsymbol{\xi}}{|\boldsymbol{\xi}|^2} $.

In addition, since $\Omega_3^\nu$ is bounded in $L^\infty(0,T;L^2(\mathcal{D}))$, we may assume, passing to further subsequences as needed, that the convergence of the right-hand-side is also weak-$\ast$ in $L^\infty(0,T;L^2(\mathcal{D}))$, so that
\[\curl \bu^0 \cdot \frac{\boldsymbol{\xi}}{|\boldsymbol{\xi}|^2} \in L^\infty(0,T;L^2(\mathcal{D})).\]

By virtue of $\eta^0=0$ and $\bu^0$ being a helical vector field, we find $\curl \bu^0 \equiv \boldsymbol{\omega}^0 = \omega^0_3 \boldsymbol{\xi}$, see Lemma \ref{lem2.4} and Remark \ref{omega3}.
Therefore we deduce
\[\curl \bu^0 \in L^\infty(0,T;L^2_{loc}(\mathcal{D})),\]
which, together with $\dv \bu^0 = 0$, imply
\[\bu^0 \in L^\infty(0,T;H^1_{per,loc}(\mathcal{D})),\]
as desired.

This completes the proof.

\end{proof}

In this article we have focused on the vanishing viscosity limit for helically symmetric flows. As we have discussed, helically symmetric solutions of the Navier-Stokes equations do not form singularities in finite time, whereas helical Euler is only known to have global solutions if the helical swirl vanishes. Furthermore, vanishing helical swirl is preserved by the Euler evolution, but not by Navier-Stokes. Given these distinctions, it seemed natural to explore the vanishing viscosity problem under helical symmetry. The key issue was to be able to control the helical swirl and to ensure that it vanishes as $\nu \to 0$. 

The relevant problem which still remains open, in this direction, is global existence for helical Euler with nonzero helical swirl.

\vspace{1cm}

\footnotesize{Acknowledgments:}
Quansen Jiu is partially supported by NSFC (No.11671273 and No. 11231006).
The research of M. C. Lopes Filho was supported in part by CNPq Grant \#306886/2014-6.
The research of H. J. Nussenzveig Lopes was partially supported by CNPq Grant \#307918/2014-9, and  FAPERJ Grant \#E-26/202.950/2015. The research of Dongjuan Niu was supported in part by FAPESP Grant \#2008/09473-0; Dongjuan Niu is partially supported by NSFC (No.11471220) and by the Beijing Municipal Commission of Education grants (No. KM201610028001). This material is based upon work supported by the National Science Foundation under Grant No. DMS-1439786 while the second and fourth authors were in residence at the Institute for Computational and Experimental Research in Mathematics (ICERM) in Providence, RI, during the Spring 2017 semester. The authors thank Anna Mazzucato for useful discussions.

%\vspace{.5cm}

\vspace{.1cm}

\end{document}